\newcommand{\weak}{\rightharpoonup}
\newcommand{\weakstar}{\stackrel{\ast}{\rightharpoonup}}
\newcommand{\vect}[1]{\boldsymbol{#1}}
\newcommand{\dx}{\mathrm{d}x}
\newcommand{\dy}{\mathrm{d}y}
\newcommand{\ds}{\mathrm{d}s}
\newcommand{\divv}[1]{\mathrm{div}(#1)}
\newcommand{\tens}[1]{\pmb{\mathsf{#1}}}
\newcommand{\tE}{\tens{E}}
\newcommand{\vu}{\vect{u}}
\newcommand{\vv}{\vect{v}}
\newcommand{\vf}{\vect{f}}
\newcommand{\vX}{\vect{V}_h}
\newcommand{\vXD}{\vect{V}_{D,h}}
\newcommand{\mR}{\mathcal{R}}
\newcommand{\mH}{\mathcal{H}_\gamma}
\newcommand{\mHh}{\mathcal{H}_{\gamma,h}}
\newcommand{\tS}{\tens{S}}
\newcommand{\Hu}{\vect{V}_D}
\newcommand{\fr}{\mathcal{H}^{p}_{\gamma}}
\newcommand{\frF}{\mathcal{H}^{F}_{\gamma}}
\newcommand{\frh}{\mathcal{H}^p_{\gamma,h}}
\newcommand{\frFh}{\mathcal{H}^{F}_{\gamma,h}}
\newcommand{\ffrF}{\tilde{\mathcal{H}}^{F}_{\gamma}}
\newcommand{\ffrFh}{\tilde{\mathcal{H}}^{F}_{\gamma, h}}
\newcommand{\mS}{\mathcal{S}}
\newcommand{\frho}{\tilde{\rho}}
\newcommand{\feta}{\tilde{\eta}}
\newcommand{\gn}{g^n}
\newcommand{\gtt}{g^t}
\newcommand{\GDt}{\Gamma^t_D}
\newcommand{\GDn}{\Gamma^n_D}
\newcommand{\GNt}{\Gamma^t_N}
\newcommand{\GNn}{\Gamma^n_N}
\newcommand{\RM}{\mathrm{RM}}
\definecolor{deepblue}{rgb}{0,0,0.5}
\definecolor{deepred}{rgb}{0.6,0,0}
\definecolor{deepgreen}{rgb}{0,0.5,0}
\definecolor{ao(english)}{rgb}{0.0, 0.5, 0.0}
\algnewcommand{\Initialize}[1]{
  \State \textbf{Initialize:}
  \Statex \hspace*{\algorithmicindent}\parbox[t]{.8\linewidth}{\raggedright #1}
}
\algnewcommand{\Indent}[2]{
  \State {#1}
  \vspace{-2mm}
  \Statex \hspace*{\algorithmicindent}\parbox[t]{.9\linewidth}{\raggedright #2}
}
\newtheorem{proposition}{Proposition}[section]
\newtheorem{theorem}{Theorem}[section]
\newtheorem{lemma}{Lemma}[section]
\newtheorem{definition}{Definition}[section]
\newtheorem{corollary}{Corollary}[section]
\newtheorem{remark}{Remark}[section]
\newtheorem{example}{Example}[section]
\numberwithin{equation}{section}
\title{{Numerical analysis of the SIMP model for the topology optimization problem of minimizing compliance in~linear~elasticity}}
\author{Ioannis P.~A.~Papadopoulos\thanks{Weierstrass Institute, DE, {\tt  papadopoulos@wias-berlin.de}.
\newline \hspace*{4.7mm} \textbf{Funding:} This work was completed with the support of the EPSRC grant EP/T022132/1 ``Spectral element methods for fractional differential equations, with applications in applied analysis and medical imaging", the Leverhulme Trust Research Project Grant RPG-2019-144 ``Constructive approximation theory on and inside algebraic curves and surfaces" and the Deutsche Forschungsgemeinschaft (DFG, German Research Foundation) under Germany’s Excellence Strategy – The Berlin Mathematics Research Center MATH + (EXC-2046/1, project ID: 390685689).  }}
\begin{document}
\maketitle
\date
\thispagestyle{empty}
\pagestyle{fancy}
\lhead{Ioannis Papadopoulos}

\begin{abstract}
We study the finite element approximation of the solid isotropic material with penalization  method  (SIMP) for the topology optimization   problem of minimizing  the compliance of a linearly elastic structure. To ensure the existence of a  local  minimizer to the infinite-dimensional problem, we consider two popular  regularization  methods: $W^{1,p}$-type  penalty methods  and density filtering. Previous results prove weak(-*) convergence in the space of the  material distribution to a  local minimizer  of the infinite-dimensional problem.  Notably, convergence was not guaranteed to \emph{all} the isolated local minimizers.  In this work, we show that, for every isolated local or global minimizer, there exists a sequence of finite element local minimizers that strongly converges to the minimizer in the appropriate space. As a by-product, this ensures that there exists a sequence of unfiltered discretized material distributions that does not exhibit checkerboarding.
\end{abstract}

\section{Introduction}

Topology optimization is an important mathematical technique extensively used in the initial stages of  engineering  design. The goal is to find the optimal design of a structure or device that minimizes an objective functional. In this work, we focus on the numerical analysis of the topology optimization  problem of minimizing  the compliance of linearly elastic materials. This is perhaps the most studied topology optimization problem, with roots that go back to the original works by Bends\o e and Kikuchi \cite{Bendsoe1988}. The topology optimization of elastic structures and its extensions have been used in numerous applications, for instance airplane wing design \cite{Aage2017}, cantilevers, and beams \cite{Bendsoe2004}. 

We consider the density formulation via the  solid isotropic material with penalization  method  (SIMP),  one of the most popular models for such problems.  Here the optimal  layout  of elastic material is encoded by a function, $\rho$, known as the material distribution. Regions where $\rho=1$ a.e.~indicate the presence of elastic material, whereas where $\rho=0$ a.e.~are areas where material is absent, as modelled by a material with negligible stiffness. The density approach results in an infinite-dimensional and nonconvex optimization problem with PDE, box, and inequality constraints. In order to obtain a well-posed problem, the infinite-dimensional model  is often  coupled with a \emph{ regularization  method} \cite{Bendsoe2004}. Irrespective of the choice of  the regularization  method, even with fixed model parameters, there may exist multiple (local) minimizers to the same density formulated problem, e.g.~\cite[Sec.~4.5 \& 4.6]{Papadopoulos2021a}.

Due to the nonlinear nature of the optimization problem, closed-form  expressions for the local minimizers are difficult to find. In practice, the  full optimization problem is discretized along with the elastic displacement $\vu$ and the material distribution $\rho$. In a discretize-then-optimize approach, the objective functional as well as the PDE, box, and inequality constraints are also discretized and one utilizes a finite-dimensional optimization algorithm to discover  discrete  local minima. Otherwise, in an optimize-then-discretize approach, one derives the first-order optimality conditions of the infinite-dimensional optimization problem and discretizes those, utilizing solvers to find  discrete  solutions. In either case, the finite element (FE) method is the most common choice for discretizing the SIMP optimization problem. Despite its abundant use, proofs that local FE minimizers (rigorously defined in \cref{def:FEmin}) converge to the local minimizers of the original infinite-dimensional problem, in a suitably strong sense, is often missing in the literature. Different classes of regularization methods require different assumptions and techniques for their analysis.  Another  difficulty is caused by the nonconvexity of the problem (in the general case). Literature that does prove convergence typically shows that a subsequence of the local FE minimizers converge to a local infinite-dimensional minimizer of the original problem. However, since the nonconvexity provides multiple candidates for the limits of these sequences, there is an inherent ambiguity  in  these statements.  In particular,  these arguments do not guarantee that \emph{every} isolated minimizer can be arbitrarily closely approximated by a sequence of local FE minimizers.

In the context of the topology optimization of Stokes flow \cite{Borrvall2003}, the issue of approximating multiple minimizers with the FE method was recently resolved \cite{Papadopoulos2021a, Papadopoulos2022a, Papadopoulos2022b, Papadopoulos2022c}. It was shown that, for every isolated local or global minimizer of the problem, there exists a sequence of FE solutions to the first-order optimality conditions that strongly converges to the infinite-dimensional minimizer as the mesh size tends to zero. We endeavour to prove a similar result here for any conforming FE spaces with an approximation property.  The different physics (Stokes vs.~linear elasticity) and the additional analysis of the regularization method are the two major obstacles that prevent the analysis in \cite{Papadopoulos2022a, Papadopoulos2022b} from directly translating to the SIMP model.  To reiterate, it is the discretization of the SIMP model for elasticity that is analyzed in this paper. 

We consider two  different  regularization  methods in this work: $W^{1,p}$-type  penalty methods  and density filtering (both described in  \cref{sec:model}).   Recall that $\rho$ and $\vu$ denote the material distribution and displacement of the elastic structure, respectively. Let $h$ denote the mesh size.  Our goal is to show that for every isolated local or global infinite-dimensional minimizer of the SIMP model, there exists a sequence of  local FE minimizers  that converges to the infinite-dimensional minimizer. In particular we show that: 
\begin{itemize}
\item With a $W^{1,p}$-type  penalty one has that $\vu_h \to \vu$ strongly in $H^1(\Omega)^d$, $\rho_h \to \rho$ strongly in $W^{1,p}(\Omega)$, and $\rho_h \to \rho$ strongly in $L^s(\Omega)$ for any $s \in [1,\infty)$.
\item Instead if one uses a (nonlinear) density filter, then we show that  $\vu_h \to \vu$ strongly in $H^1(\Omega)^d$ and $\rho_h \to \rho$ strongly in $L^s(\Omega)$ for any $s \in [1,\infty)$. Moreover, we prove that $\frho_h \to \frho$ strongly in $W^{1,q}(\Omega)$, provided $\frho_h \in W^{1,q}(\Omega)$, where $\frho$ and $\frho_h$ denote the filtered material distribution and the discretized filtered material distribution, respectively.
\end{itemize}  
All the stated convergence results are novel, particularly for nonlinear density filters \cite{Wadbro2015, Hagg2017}. 

 The main results of this work are stated in Theorems \labelcref{th:FEexistence}--\labelcref{th:FEexistence-filtering}. The SIMP model, $W^{1,p}$-type penalty methods, and density filtering are introduced in \cref{sec:model}, \cref{sec:w1p}, and \cref{sec:density-filtering}, respectively. We discuss reformulating the generalized linear elasticity PDE constraint in variational form in \cref{sec:variational-form}, detail existence theorems in \cref{sec:existence}, and define isolated minimizers in \cref{sec:isolated}. We provide a literature review of previous FE results in \cref{sec:lit-review}  and discuss the implications of the convergence results in \cref{sec:implications}.  We establish the FE discretization in \cref{sec:fem} where we discuss the assumptions of the discretization. In \cref{sec:lemmas} we give useful auxiliary results known in the literature. Their proofs are provided in \cref{sec:app:proofs} for completeness. In \cref{sec:regularization}, we focus on $W^{1,p}$-type  penalty methods  and prove \cref{th:FEexistence}. We consider density filtering in \cref{sec:filtering} and prove Theorems \labelcref{th:FEexistence-filtering2} and \labelcref{th:FEexistence-filtering}. Finally, in \cref{sec:conclusions}, we give our conclusions and detail potential future extensions.

\section{The SIMP model}
\label{sec:model}

Consider a bounded and open Lipschitz domain $\Omega \subset \mathbb{R}^d$, $d \in \{2,3\}$. Let $W^{s,p}(\Omega)$, $s \in (0,\infty)$, $p \in [1,\infty]$, and $L^q(\Omega)$, $q \in [1,\infty]$, denote the standard Sobolev and Lebesgue spaces, respectively \cite{Adams2003}. Define $H^s(\Omega) \coloneqq W^{s,2}(\Omega)$. Let $\Gamma \subseteq \partial \Omega$ be a subset of the boundary with nonzero Hausdorff measure $\mathcal{H}^{d-1}(\Gamma)>0$. Then we define:
\begin{align}
H^1_{{\Gamma}}(\Omega)^d &:= \{\vect{v} \in H^1(\Omega)^d: \vect{v}|_{\Gamma} = \vect{0} \},\label{def:fs3}
\end{align}
where $|_\Gamma$ is the standard boundary trace operator $|_\Gamma : W^{1,p}(\Omega) \to W^{1-\frac{1}{p}, p}(\partial \Omega)$ \cite{Gagliardo1957}.
 
The topology optimization problem is to find $\vu :  \Omega  \to \mathbb{R}^d$, $\vu \in H^1(\Omega)^d$  and $\rho :  \Omega  \to \mathbb{R}$, $\rho \in L^\infty(\Omega)$,  that  minimize 
\begin{align}
J(\vect{u},\rho)  \coloneqq l(\vu) + \mathcal{R}(\rho) \label{complianceopt}
\end{align}
where
\begin{align}
l(\vu) \coloneqq \int_{\Omega} \vf \cdot \vu \, \dx + \int_{\GNt} \gtt  \vu_t  \, \ds + \int_{\GNn} \gn  \vu_n \,  \ds, \label{complianceopt2}
\end{align}
subject to the  generalized  linear elasticity PDE constraint,  cf.~\cite[Sec.~2]{Ainsworth2022} and \cite{Bauer2016}, 
\begin{align}
\begin{split}
\begin{cases}
-\divv{\tens{S}} = \vf & \text{in} \; \Omega,\\
\tens{S} = k(F(\rho))\left[2 \mu  \tens{D}(\vect{u}) + \lambda \divv{\vu} \tens{I} \right]& \text{in} \; \Omega,\\ 
 \vu_t = 0  & \text{on} \; \GDt,\\
 \vu_n = 0  & \text{on} \; \GDn,\\
 (\tens{S}\vect{\hat{n}})_t   = \gtt  & \text{on} \; \GNt,\\
 (\tens{S} \vect{\hat{n}})_n   = \gn  & \text{on} \; \GNn,
\end{cases}
\end{split}
\label{eq:elasticitypde}
\end{align}
as well as the following box and inequality constraints on $\rho \in L^\infty(\Omega)$:
\begin{align}
0 \leq \rho \leq 1 \;\; \text{a.e.~in} \;\; \Omega \quad \text{and} \quad \int_\Omega F(\rho) \, \dx \leq \gamma |\Omega|. \label{eq:rhoconstraints}
\end{align}
The state $\vu$ denotes the  elastic  displacement of the structure and $\tens{S}$ denotes the stress tensor. The body force $\vf \in L^2(\Omega)^d$ and traction forces $\gtt \in L^2(\GNt)$ and $\gn \in L^2(\GNn)$ are known. Here  $\vect{\hat{n}}$ denotes the unit outward normal, $\vv_n \coloneqq \vv|_{\partial \Omega} \cdot \vect{\hat{n}}$, and $\vv_t \coloneqq \vv|_{\partial \Omega} - \vv_n \vect{\hat{n}}$, whenever $\vv_n$ and $\vv_t$ are well-defined.  $\GNn , \GDn \subset \partial \Omega$ (respectively $\GNt , \GDt \subset \partial \Omega$) are known disjoint boundaries on $\partial \Omega$ such that $\GNn \cup \GDn = \partial \Omega$ (respectively $\GNt \cup \GDt = \partial \Omega$). $\mu$ and $\lambda$ are the Lam\'e coefficients, $\text{tr}(\cdot)$ is the matrix-trace operator, $\tens{I}$ is the $d \times d$ identity matrix, and 
\begin{align*}
\tens{D}(\vect{u}) = \frac{1}{2}( \nabla \vect{u} + \nabla \vect{u}^\top), \;\;\;
k(\rho) = \epsilon_{\text{SIMP}}+ (1-\epsilon_{\text{SIMP}}) \rho^{p_s},
\end{align*} 
where $0< \epsilon_{\text{SIMP}} \ll1$ and $p_s \geq 1$. A typical ``magic" choice resulting in crisp  material distributions  is $p_s =3$ \cite[Sec.~3]{Sigmund2013}. The constant $\gamma \in (0,1)$ is the fraction of the total volume of the domain that the material distribution can occupy. Here, $| \cdot |$ denotes the Lebesgue measure of a set.  $\mR : W^{1,p}(\Omega) \to \mathbb{R}$ is a functional that models a $W^{1,p}$-type penalty and $F : \{\eta \in L^\infty(\Omega) : 0 \leq \eta \leq 1 \; \text{a.e.}\} \to  W^{1,\infty}(\Omega)$ is a function that encodes density filtering.

\begin{remark}
We have provided the linear elasticity PDE constraint in \cref{eq:elasticitypde} with very general boundary conditions. One could instead reduce the boundary conditions to two boundaries $\Gamma_D$ and $\Gamma_N$, $\Gamma_D \cup \Gamma_N = \partial \Omega$, such that 
\begin{align}
\begin{split}
\begin{cases}
\vu|_{\Gamma_D} = \vect{0}  & \text{on} \; \Gamma_D,\\
(\tS \vect{\hat{n}})|_{\Gamma_N} =  \vect{g}   & \text{on} \; \Gamma_N,
\label{eq:simpler-bcs}
\end{cases}
\end{split}
\end{align}
for some  $\vect{g} \in L^2(\Gamma_N)^d$.  All the subsequent results in this work still hold. However, we note that these simpler boundary conditions are insufficient to describe the physics for the MBB beam problem in \cref{ex:mbb} and many other common SIMP models.
\end{remark}

For a physical interpretation of the SIMP model we refer to Bends\o e and Sigmund \cite[Ch.~1]{Bendsoe2004}.  When $\rho \approx 1$, then $k(\rho) \approx 1$ indicating the presence of material, whereas where $\rho \approx 0$, then $k(\rho) \approx \epsilon_{\text{SIMP}}$ indicating void.  The optimal material distribution that minimizes the compliance is $\rho=1$ a.e.~in $\Omega$. However, due to the volume constraint restriction and the limitation $\rho \leq 1$ a.e., regions where $\rho < 1$ a.e.~become necessary.  The SIMP model penalizes intermediate regions where $0 < \rho < 1$ a.e.~because intermediate regions have a stiffness proportional to $k(F(\rho))$ for a volume that is proportional to $F(\rho)$. This makes such regions inefficient at minimizing compliance under a volume constraint. Thus, a local minimizer will favour larger regions where either $\rho=0$ or $\rho=1$ a.e., while significantly diminishing regions where $0 < \rho < 1$ a.e.~as $p_s \to \infty$. Hence, by raising $\rho$ to the power of $p_s$, values of $\rho$ between 0 and 1 are penalized.

The optimization problem \cref{complianceopt}--\cref{eq:rhoconstraints} is normally stated with $\mR(\rho) = 0$ and $F(\rho) = \rho$. With these choices \cref{complianceopt}--\cref{eq:rhoconstraints} is ill-posed in general, i.e.~it does not have minimizers in the continuous setting  \cite{Borrvall2001a}. In particular any attempt at a proof of existence, via a direct method in the calculus of variations \cite{Dacorogna2014}, fails.   Naive  attempts at finding minimizers often yield checkerboard patterns of $\rho$  (as discussed in \cref{sec:checkerboard}).  Checkerboarding can be avoided, without modifications to the discretized optimization problem induced by \cref{complianceopt}--\cref{eq:rhoconstraints}, by particular choices of FE spaces for $\vu$ and $\rho$. However, the  discretized material distribution  will still be mesh dependent i.e.~as the mesh is refined, the beams of the  discretized material distribution  will become ever thinner, leading to  the homogenized limit.  There are several schemes employed by the topology optimization community to obtain physically reasonable  local minimizers  known as  regularization  methods \cite{Bendsoe2004}. An excellent review is given by Sigmund and Petersson \cite{Sigmund1998} as well as Borrvall \cite{Borrvall2001a}. We consider two  regularization  methods in this work: $W^{1,p}$-type  penalty methods  and density filtering.

% Regularization  methods are typically seen in one of two ways:
%\begin{enumerate}[label=({M}\arabic*)]
%\itemsep=0pt
%\item We seek a solution $\rho \in \fr \subset \mH$ such that $\fr$ is weakly compact in the correct space; \label{res:1}
%\item We modify the objective functional so that it is weakly-* lower semicontinuous on $L^\infty(\Omega)$. \label{res:2}
%\end{enumerate}

%We incorporate two of the most common types of  regularization  methods via $\mR(\rho)$ and $F(\rho)$.

\subsection{$W^{1,p}$-type penalty methods}
\label{sec:w1p}
We use the functional $\mR$ to model $W^{1,p}$-type  penalties  of $\rho$ which forces one to seek a  local minimizer with the regularity $\rho \in L^\infty(\Omega) \cap W^{1,p}(\Omega)$.
\begin{definition}[$W^{1,p}$-type penalty methods]
\label{def:w1p}
For $p \in (1,\infty)$, we say that the regularization  method is a \emph{$W^{1,p}$-type penalty} method if $\mR(\rho) = \frac{\epsilon}{p} \| \nabla \rho\|^p_{L^p(\Omega)} + m(\rho)$, where $m(\rho)$ is continuous in the strong topology of $L^p(\Omega)$  and models lower order terms not involving $\nabla \rho$. For instance $m(\rho) = \int_\Omega \rho(1-\rho) \dx$ (if $p\geq 2$). We refer to $\mR$ as a \emph{$W^{1,p}$-type penalty functional}.
\end{definition}
We define the material distribution space $\fr$ as
\begin{align}
\fr \coloneqq \{ \rho \in L^\infty(\Omega) \cap W^{1,p}(\Omega) :  0 \leq \rho \leq 1 \; \text{a.e.}, \; \int_\Omega \rho \, \dx \leq \gamma |\Omega|\}.
\end{align}

Examples of choices for $\mR$ include, for $\epsilon > 0$, $\beta > 0$,
\begin{alignat}{2}
\mR(\rho) &= \frac{\epsilon}{p} \| \nabla \rho \|^p_{L^p(\Omega)}, \;\; p \in (1,\infty), &&\quad  \text{($W^{1,p}$-penalty),}\\
\mR(\rho) &= \frac{\beta \epsilon}{2} \| \nabla \rho\|^2_{L^2(\Omega)} + \frac{\beta}{2\epsilon} \int_\Omega \rho(1-\rho) \, \dx && \quad \text{(Ginzburg--Landau penalty).} \label{eq:ginzburg-landau}
%\mR(\rho) &= \frac{1}{\epsilon} K(\rho) &&\quad \text{(Intermediate density control).}
\end{alignat} 
If $p=1$ or $p=\infty$, one recovers total variation regularization \cite{Petersson1999b} and a Lipschitz continuity penalty method \cite{Petersson1998}, respectively, which are beyond the scope of this work.

Often the $W^{1,p}$-type  penalty  is added as an additional constraint rather than at the level of the objective functional. For instance, $\mR(\rho) = 0$ but one adds the constraint $\| \nabla \rho\|_{L^p(\Omega)} \leq M < \infty$ for a user-chosen $M > 0$. Although the implementation differs, the analysis is similar and we expect many of the results discussed in this work to continue to hold.

\begin{example}[MBB beam]
\label{ex:mbb}
To illustrate the SIMP  model  with a $W^{1,p}$-type penalty,  we provide an example of an MBB beam. Consider the domain $\Omega = (0,3) \times (0,1)$. Consider the boundaries:
\begin{align}
\begin{split}
\Gamma^n_{D_1} = \{ x = 0 \}, \;\; \Gamma^n_{D_2} = \{ y =0, \; 2.9 \leq x \leq 3\}, \;\; \Gamma^n_{N_1} = \{ y = 1,\, 0 \leq x \leq 0.1\}.
\end{split}
\end{align}
The Dirichlet and Neumann boundaries are 
\begin{align}
\begin{split}
\GDn = \Gamma^n_{D_1} \cup  \Gamma^n_{D_2}, \;
\GDt = \emptyset, \;
\GNn  = \partial \Omega \backslash \GDn, \;
\GNt = \partial \Omega.
\end{split}
\end{align}
The body and traction forces are $\vf = \vect{0}$, $\gtt = 0$, and 
\begin{align}
\gn(x,y) =
\begin{cases}
-10 & \text{if} \; (x,y) \in \Gamma^n_{N_1}, \\
0 & \text{otherwise}.
\end{cases}
\end{align}
These conditions describe a half-beam that is fixed horizontally on the $y$-axis and fixed vertically at its bottom right corner on the $x$-axis. There is a boundary force pushing vertically downwards at the top left corner, which represents the middle of the beam when the half-beam is mirrored. This is visualised in \cref{fig:mbbsetup}. %With these boundary conditions, a check reveals that $\RM^d \cap H^1_D(\Omega)^d = \{ \vect{0} \}$. Thus $\Hu =  H^1_D(\Omega)^d$. 

Consider the volume fraction $\gamma = 0.535$, the Lam\'e coefficients $\mu = 75.38$ and $\lambda=64.62$, the SIMP parameters, $\epsilon_{\text{SIMP}} = 10^{-5}$ and $p_s = 3$. If one uses a Ginzburg--Landau penalty, it is possible to find two local minima as depicted in \cref{fig:mbbsolns}.
\begin{figure}[h!]
\centering
\includegraphics[width =0.6 \textwidth]{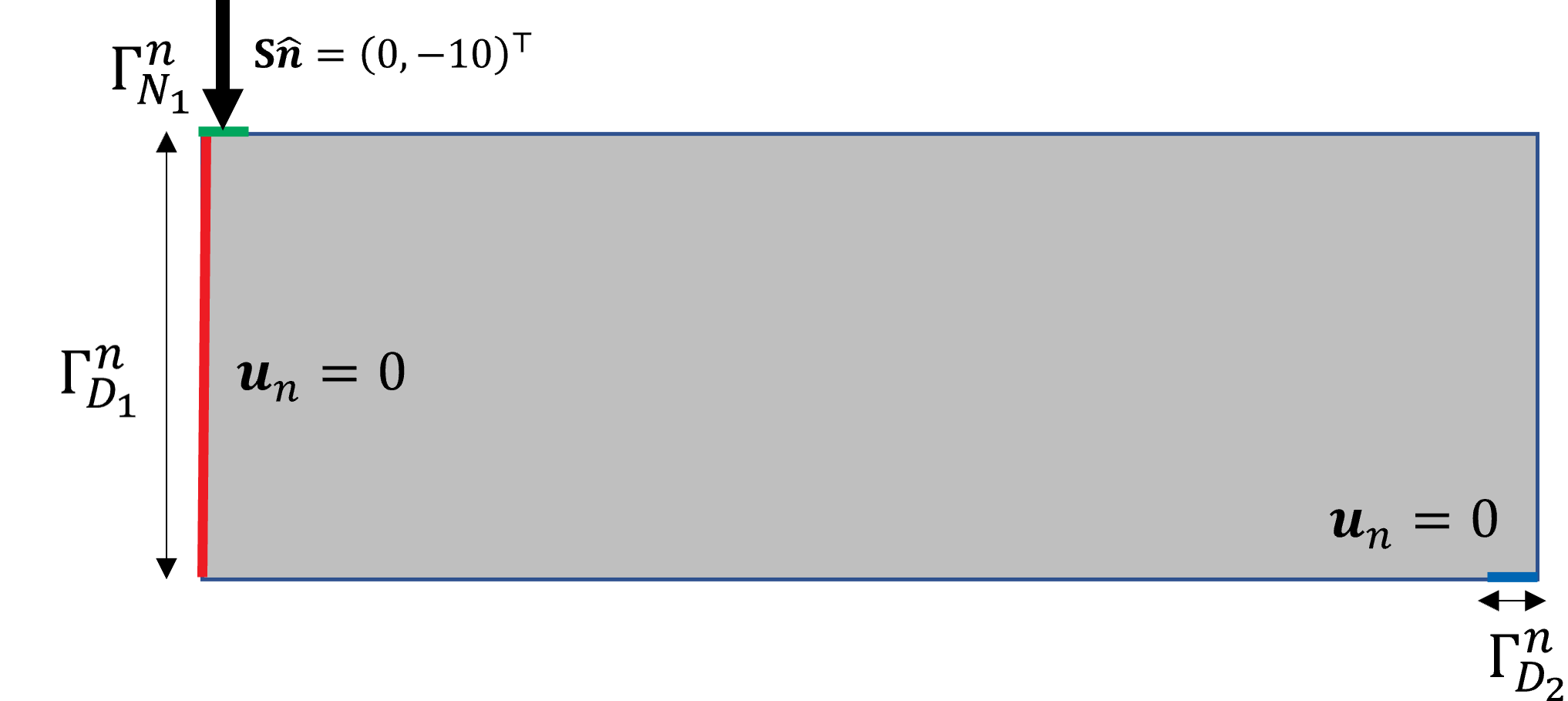}
\caption{Setup of the MBB beam. The remaining unlabeled boundary conditions are  $(\tS \vect{\hat{n}})_t = 0$  on $\Gamma^n_{D_1} \cup \Gamma^n_{D_2}$ and $\tS \vect{\hat{n}} = (0,0)^\top$ on $\partial \Omega \backslash (\Gamma^n_{D_1} \cup \Gamma^n_{D_2} \cup \Gamma^n_{N_1})$.}\label{fig:mbbsetup}
\end{figure}
\begin{figure}[h!]
\centering
\includegraphics[width =0.45 \textwidth]{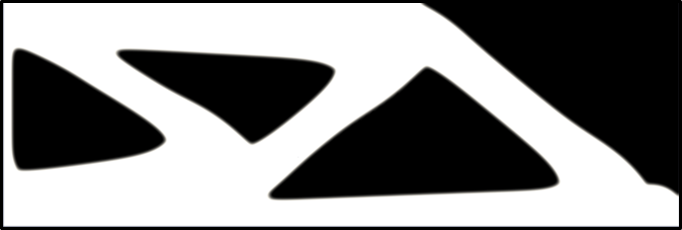}
\includegraphics[width =0.45 \textwidth]{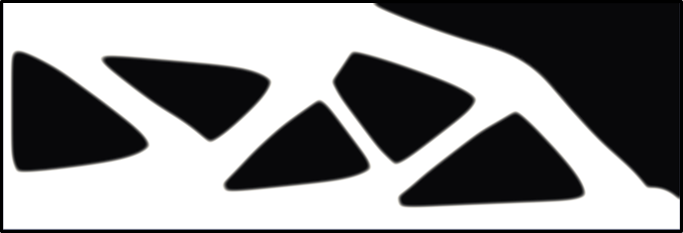}
\caption{The material distribution of two (local) minima of the MBB beam problem \cite[Sec.~4.6]{Papadopoulos2021a}. In white regions $\rho=1$ whereas in black regions $\rho = 0$. The  Ginzburg--Landau  parameters, as defined in \cref{eq:ginzburg-landau},  are $\epsilon = 1.90 \times 10^{-2}$, $\beta = 9 \times 10^{-3}$. Moreover, $\gamma = 0.535$, $\epsilon_{\text{SIMP}} = 10^{-5}$, $p_s = 3$, and the Lam\'e coefficients are $\mu = 75.38$ and $\lambda = 64.62$.}\label{fig:mbbsolns}
\end{figure}
\end{example}

\subsection{Density filtering}
\label{sec:density-filtering}
The function $F(\cdot)$ models a \emph{density filtering} of the problem \cite{Bourdin2001, Hagg2017, Wadbro2015}. Filtering averages $\rho$ in small neighbourhoods at every point. The first application in the context of topology optimization of compliance is attributed to Bruns and Tortorelli \cite{Bruns2001}; existence and FE convergence was first shown by Bourdin \cite{Bourdin2001}. We define the unfiltered material distribution space $\frF$ as
\begin{align}
\frF \coloneqq \{ \rho \in L^\infty(\Omega) :  0 \leq \rho \leq 1 \; \text{a.e.}, \; \int_\Omega F(\rho) \, \dx \leq \gamma |\Omega|\}
\end{align}
and the filtered material distribution space $\ffrF$
\begin{align}
\ffrF \coloneqq \{ \tilde{\eta} \in W^{1,\infty}(\Omega) : \exists \; \eta \in \frF \; \text{s.t.} \; \tilde{\eta} = F(\eta) \}. \label{def:ffr}
\end{align}
In the context of density filtering, the material distribution, prior to filtering, $\rho \in \frF$, is known as the \emph{unfiltered} material distribution and $\frho = F(\rho) \in \ffrF$ is known as the \emph{filtered} material distribution.

We make the following assumptions on the density filters:
\begin{enumerate}[label=({F}\arabic*)]
\item For any $\eta \in \frF$, then $\|F(\eta) \|_{W^{1,\infty}(\Omega)} \leq C < \infty$ with $C$ independent of $\eta$; \label{ass:filtering2}
\item For any $\eta \in \frF$, we have $0 \leq F(\eta) \leq 1$ a.e.~in $\Omega$; \label{ass:filtering-box}
%\item $F$ is Fr\'echet differentiable in $L^\infty(\Omega)$ with respect to its argument; \label{ass:filtering5}
\item Suppose there exists an $L^s$-bounded sequence, $s \in [1,\infty]$, such that $\eta_j \weak \eta$ weakly in $L^s(\Omega)$ ($\eta_j \weakstar \eta$ weakly-* in $L^\infty(\Omega)$ if $s = \infty$). Then there exists a subsequence (up to relabelling) such that $F(\eta_j) \to F(\eta)$ strongly in $L^s(\Omega)$. \label{ass:filtering}
\end{enumerate}
To prove convergence of the filtered material distribution in $W^{1,s}(\Omega)$,  for some $s \in (1,\infty)$, we require the following assumption:
\begin{enumerate}[label=({F}\arabic*)]
\setcounter{enumi}{3}
\item The space of filtered material distributions, $\ffrF$, is a norm-closed and convex subset of $W^{1,q}(\Omega)$ for some $q \in (1,\infty)$. \label{ass:filtering4}
\end{enumerate}

One class of density filters are known as linear density filters.
\begin{definition}
\label{def:linear-filter}
A density filter $F$ is a \emph{linear density filter} if the material distribution $\rho$ is convolved with a function that is not dependent on $\rho$, i.e.
\begin{align}
F(\rho)(x) = \int_\Omega f(x-y) \rho(y) \, \dy, 
\end{align}
where $f \in W^{1,\infty}(\mathbb{R}^d)$, $f \geq 0$ a.e.~and $\| f \|_{L^1(\mathbb{R}^d)} = 1$.
\end{definition}

The following proposition is a known result and we give a proof in \cref{sec:app:proofs} for completeness. 
\begin{proposition}
\label{prop:filter:ass}
Suppose that $F$ is a linear density filter. Then, \labelcref{ass:filtering2}--\labelcref{ass:filtering4} are satisfied.
\end{proposition}

We do not  analyze  \emph{sensitivity filtering} in this work \cite{Sigmund1994}.  Sensitivity filtering is cheap and effective and, under suitable conditions, may be understood as the identification of the Fr\'echet derivative of the objective functional with a gradient in a suitably chosen Hilbert space \cite[Sec.~5.2]{Allaire2021}, see also \cite{Evgrafov2019, Lazarov2011} for other studies. However, it does not fit under our framework and thus  it is  not studied here.

\subsection{Variational form} 
\label{sec:variational-form}
For a FE discretization of the linear elasticity PDE constraint, we must recast \cref{eq:elasticitypde} in variational form.  The most common choice, in the topology optimization literature, is the primal formulation and all known FE convergence results for the SIMP model only deal with this case. The primal formulation substitutes the explicit formula for the stress tensor $\tS$ into the first equation of \cref{eq:elasticitypde}. Thus the two PDE constraints reduce to one PDE constraint in terms of $\vu$ and $\rho$. This PDE constraint is then multiplied by a test function $\vv$ and an integration by parts is performed. The advantage of the primal formulation lies in its simplicity and cost-effectiveness, attributed to the reduced computational load from not discretizing the stress tensor, thus saving degrees of freedom.

By choosing not to eliminate the stress tensor, one may form a mixed formulation whereby one discretizes and solves for the stress tensor and elastic displacement independently. There are multiple mixed formulations found in the literature and we refer the interested reader to \cite[Ch.~VI.3]{Braess2007}. In this work we solely consider a primal formulation and leave the analysis of different mixed formulations for future work.

Let $H^1_D(\Omega)^d \coloneqq \{ \vv \in H^1(\Omega)^d :  \vv_t|_{\GDt} =0, \, \vv_n|_{\GDn} = 0  \}$ and denote the space of rigid body motions by $\RM^d$ \cite[Sec.~2.2]{Bauer2016}:
\begin{align}
\RM^d \coloneqq \{ \tens{R} \vect{x} + \vect{a} : \vect{a}\ \in \mathbb{R}^d, \, \tens{R} \in \mathrm{skew}_d\},
\end{align}
where $\mathrm{skew}_d$ denotes the vector space of constant skew-symmetric $d \times d$ matrices. For instance, $\RM^2 = \text{span}\{[1,0]^\top, [0,1]^\top, [-y,x]^\top \}$. Define the space of displacements by $\Hu$ where
\begin{align}
\Hu \coloneqq \{ \vv \in H^1_D(\Omega)^d : (\nabla \vv,  \tens{R})_{L^2(\Omega)} = 0 \;\forall \; \tens{R} \in \nabla (\RM^d \cap H^1_D(\Omega)^d)\},
\label{def:Hu}
\end{align}
 where, given two matrices of functions $\tens{A}, \tens{B}  \in L^2(\Omega)^{d \times d}$, then $(\tens{A},\tens{B})^2_{L^2(\Omega)} \coloneqq  \sum_{i,j=1}^d (\tens{A}_{ij}, \tens{B}_{ij})^2_{L^2(\Omega)}$. For instance, given the boundary conditions in \cref{eq:simpler-bcs} we have that $H^1_D(\Omega)^d \cap \RM^d = \{ \vect{0} \}$ and the definition of $\Hu$ in \cref{def:Hu} reduces to $\Hu = H^1_D(\Omega)^d$.  The additional condition $(\nabla \vv, \tens{R})_{L^2(\Omega)} =0$ plays a role if the boundary conditions of the elastic problem are not sufficient to uniquely determine the solution. 

 Recall the definition of $l(\vv)$ in \cref{complianceopt2} and let $\mH$ denote
\begin{align}
\mH \coloneqq 
\begin{cases}
\fr & \text{if $\mR(\rho) \neq 0$ and $F(\rho) = \rho$},\\
\frF  & \text{if $\mR(\rho) = 0$ and $F(\rho) \neq \rho$},\\
\fr \cap \frF & \text{if $\mR(\rho) \neq 0$ and $F(\rho) \neq \rho$}.
\end{cases}
\end{align}
 In the variational form, the primal formulation of the optimization problem reduces to,
\begin{align}
 \min_{(\vu, \rho) \in \Hu \times \mH}  J(\vu,\rho) \;\; \text{subject to} \;\;
a(\vu,\vv; \rho) = l(\vv)   \; \text{for all} \; \vv \in \Hu,  
\label{cto} \tag{SIMP}
\end{align}
where
\begin{align}
a(\vu,\vv; \rho) &\coloneqq \int_\Omega k(F(\rho)) \tE \vu : \tE \vv \, \dx,\\
\tE \vu : \tE \vv  &\coloneqq  2 \mu  \tens{D}(\vect{u}) : \tens{D}(\vv) + \lambda \divv{\vu} \,\divv{\vv}.
\end{align}

Note that $\rho$ only appears in the optimization problem \cref{complianceopt}--\cref{eq:rhoconstraints} in the box and volume constraints as well as the  $W^{1,p}$-type penalty term $\mR$.  Moreover, by assumption \labelcref{ass:filtering-box}, $0 \leq \rho \leq 1$ a.e.~implies that $0 \leq F(\rho) \leq 1$.  If $\mR(\rho) = 0$ (no $W^{1,p}$-type penalty, but a density filter is applied) then \cref{cto} may be recast as
\begin{align}
\min_{(\vu, \frho) \in \Hu \times \ffrF} \tilde{J}(\vu,\frho)  \;\; \text{subject to} \;\;
\tilde{a}(\vu,\vv; \frho) = l(\vv)    \; \text{for all} \; \vv \in \Hu,
\label{fcto} \tag{F-SIMP}
\end{align}
 where $\tilde{J}(\vu,\frho) =  J(\vu,\rho)$ and $\tilde{a}(\vu,\vv; \frho) = a(\vu,\vv; \rho)$.

\subsection{Existence of local minimizers}
\label{sec:existence}
In this subsection we provide existence theorems for \cref{cto}.

\begin{proposition}[Existence \& uniqueness with a fixed $\rho$]
\label{prop:uniqueness}
Suppose that  $\rho \in \mH$  is fixed, $l(\vv) = 0$ for all $\vv \in H^1_{ D }(\Omega)^d \cap \RM^d$, and \labelcref{ass:filtering-box} holds. Then, there exists a unique $\vu \in \Hu$ that satisfies the PDE constraint in \cref{cto}. 
\end{proposition}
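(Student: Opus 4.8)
The plan is to recognize the PDE constraint in \cref{cto} as a standard symmetric coercive variational problem and to apply the Lax--Milgram theorem. With $\rho \in \fr$ fixed, the task is to find $\vu \in \Hu$ satisfying $a(\vu,\vv;\rho) = (\vf,\vv)_{L^2(\Gamma_N)}$ for all $\vv \in \Hu$, and the coefficient $k(F(\rho))$ is a fixed element of $L^\infty(\Omega)$. The three ingredients I would verify are: continuity of $a(\cdot,\cdot;\rho)$ on $\Hu \times \Hu$, coercivity of $a(\cdot,\cdot;\rho)$ on $\Hu$, and continuity of the linear functional $\vv \mapsto (\vf,\vv)_{L^2(\Gamma_N)}$.

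The first step is to obtain two-sided bounds on the coefficient. By \labelcref{ass:filtering-box} we have $0 \leq F(\rho) \leq 1$ a.e.~in $\Omega$, and since $k(t) = \epsilon_{\text{SIMP}} + (1-\epsilon_{\text{SIMP}})t^{p_s}$ is monotone increasing on $[0,1]$, it follows that $\epsilon_{\text{SIMP}} \leq k(F(\rho)) \leq 1$ a.e. The strictly positive lower bound---the weak-material safeguard $\epsilon_{\text{SIMP}} > 0$---is the crucial ingredient, as it prevents the form from degenerating on the region where $\rho$ vanishes.

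Continuity of $a$ is then immediate from the upper bound $k(F(\rho)) \leq 1$ and Cauchy--Schwarz, using $|\tE\vu : \tE\vv| \leq C(\mu,\lambda)|\nabla\vu|\,|\nabla\vv|$. For coercivity I would combine the pointwise identity $\tE\vu : \tE\vu = 2\mu|\tens{D}(\vu)|^2 + \lambda(\divv{\vu})^2 \geq 2\mu|\tens{D}(\vu)|^2$ (valid for $\mu>0$, $\lambda \geq 0$) with the lower bound $k(F(\rho)) \geq \epsilon_{\text{SIMP}}$ and then invoke Korn's inequality on $\Hu$. Korn's inequality applies precisely because $\Gamma_D$ carries positive Hausdorff measure, as assumed in the definition of $\Hu$; it yields $\|\tens{D}(\vu)\|_{L^2(\Omega)}^2 \geq c\|\vu\|_{H^1(\Omega)^d}^2$ and hence $a(\vu,\vu;\rho) \geq 2\mu\epsilon_{\text{SIMP}}c\,\|\vu\|_{H^1(\Omega)^d}^2$.

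Finally, continuity of the load functional follows from Cauchy--Schwarz on $\Gamma_N$ together with the boundedness of the trace $H^1(\Omega)^d \to L^2(\partial\Omega)^d$, using $\vf \in H^{1/2}(\Gamma_N)^d \subset L^2(\Gamma_N)^d$. With all three hypotheses in hand, the Lax--Milgram theorem delivers the unique $\vu \in \Hu$. I expect the coercivity step to be the only genuine obstacle: it relies jointly on the strict positivity $\epsilon_{\text{SIMP}} > 0$ and on the validity of Korn's inequality over the partially clamped space $\Hu$, the latter hinging on $\mathcal{H}^{d-1}(\Gamma_D) > 0$.
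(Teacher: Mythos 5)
Your proposal is correct and follows exactly the paper's route: the paper's proof is a one-line appeal to Korn's second inequality and the Lax--Milgram theorem, and you have simply filled in the standard details --- the two-sided bound $\epsilon_{\text{SIMP}} \leq k(F(\rho)) \leq 1$ from \labelcref{ass:filtering-box}, coercivity via Korn's inequality on $\Hu$ (valid since $\mathcal{H}^{d-1}(\Gamma_D)>0$), and boundedness of the form and of the load functional via the trace theorem. No gaps; this is the intended argument.
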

\begin{proof}
The result is standard and follows as a direct consequence of Korn's inequality with tangential or normal boundary conditions \cite[Th.~11]{Bauer2016} and the Lax--Milgram Theorem \cite[Ch.~6.2, Th.~1]{Evans2010}. See also \cite[Ch.~11]{Brenner2008}.
\end{proof}

An essential result utilized in this work is the existence of a  local   minimizer with either a  $W^{1,p}$-penalty method  or a density filter.

\begin{theorem}[Existence of a local minimizer to \cref{cto}]
\label{prop:existence}
Suppose that the conditions of \cref{prop:uniqueness} hold. Consider the two cases:
\hfill \break
\noindent \textbf{\emph{(Case 1).}} If a $W^{1,p}$-type penalty method is used, then there exists a local minimizer $(\vu, \rho) \in \Hu \times  \fr$ of \cref{cto}. 

\smallskip
\noindent \textbf{\emph{(Case 2).}} If a density filter is used and \labelcref{ass:filtering2}--\labelcref{ass:filtering} hold, then there exists a local minimizer $(\vu, \rho) \in \Hu \times  \frF$ of \cref{cto}. 
\end{theorem}
\begin{proof}
The proof of (Case 1) may be found in \cite[Ch.~5.2.2 ]{Bendsoe2004} and the proof of (Case 2) is found in \cite[Sec.~3]{Bourdin2001}.
\end{proof}

\subsection{Isolated local minimizers}
\label{sec:isolated}

A key assumption for the FE convergence results in this work is that the local and global minimizers in the limit are \emph{isolated}. Consider an element $u \in U$ where $U$ is a Banach space. We denote
the closed ball of radius $r$ by $B_{r,U}(u)$:
\begin{align}
B_{r,U}(u) \coloneqq \{v \in U : \|u-v\|_U \leq r \}.
\end{align}
This is extended to tuples of functions as follows: consider $u_i \in U_i$, for $i=1,\dots,n$,	 where $U_i$ are Banach spaces.  We define $B_{r, U_1 \times \cdots \times U_n}(u_1,\dots,u_n)$ as  
\begin{align}
\begin{split}
&B_{r, U_1 \times \cdots \times U_n}(u_1,\dots,u_n)\\
&\coloneqq \{ (v_1,\dots,v_n) \in U_1 \times \cdots \times U_n: \sum_{i=1}^n \|v_i-u_i\|_{U_i} \leq r\}.
\end{split}
\end{align}

\begin{definition}[Isolated local minimizer of \cref{cto}]
We say that a local minimizer $(\vu, \rho) \in \Hu \times \mH$ of \cref{cto} is \emph{isolated} if there exists an $r>0$ such that $(\vu, \rho)$ is the unique local minimizer in $B_{r, H^1(\Omega) \times Y}(\vu,\rho) \cap (\Hu \times \mH)$. Here 
\begin{align}
Y = 
\begin{cases}
W^{1,p}(\Omega) & \text{if a $W^{1,p}$-type penalty is used,}\\
L^2(\Omega) & \text{otherwise.}
\end{cases}
\end{align}
We define the \emph{radius of the basin of attraction} as the largest value $r$ such that $(\vu,\rho)$ is the unique minimizer. We note that 
\begin{align*}
(\Hu \cap B_{r/2,H^1(\Omega)}(\vu)) &\times (\mH \cap B_{r/2,Y}(\rho)) \\
&\indent \subset B_{r, H^1(\Omega) \times Y}(\vu,\rho) \cap (\Hu \times \mH )
\end{align*}
and hence  $(\vu,\rho)$ is also the unique minimizer in $(\Hu \cap B_{r/2,H^1(\Omega)}(\vu)) \times (\mH \cap B_{r/2,Y}(\rho))$. 
\end{definition}

\begin{remark}
We make the assumption that $\rho \in \mH$ is isolated with respect to  the  $Y$-norm (as opposed to the $L^\infty$-norm). This is a stronger isolation assumption as discussed in \cite[Rem.~7]{Papadopoulos2022a}.  To our knowledge,  the validity of the $Y$-isolation in SIMP optimization problems, regularized by a $W^{1,p}$-type penalty method or density filtering, is an open problem. However, qualitatively and numerically, it appears to hold in many practical problems e.g.~in \cref{ex:mbb}.  We make this stronger isolation assumption as  continuous functions are not dense in $L^\infty(\Omega)$, but are dense in $Y$. This has implications in the assumptions for the FE spaces (in particular \labelcref{ass:dense}).
\end{remark}

\begin{remark}[Violating the isolation assumption]
The results in Theorems \labelcref{th:FEexistence}--\labelcref{th:FEexistence-filtering} heavily rely on the isolation assumption in order to deduce the limit of a FE minimizing sequence. If the isolation assumption was lifted, the results, at least as derived in this work, would no longer be valid. Compliance problems, in full generality, can support very interesting solution landscapes, e.g.~an infinite number of global minimizers, no classical minimizers or local minimizers that are not necessarily isolated \cite[Ch.~4.1.5]{Allaire2002b}. A study of such local minimizers is beyond the scope of this work.
\end{remark}

\subsection{Literature review}
\label{sec:lit-review}

The focus of this subsection is to conduct a literature review and contextualize our aims. We define a local FE minimizer in \cref{def:FEmin}. Our goal is to answer the following open problems in the numerical analysis:
\begin{enumerate}[label=({P}\arabic*)]
\item With the exception of cases where a unique global minimizer exists, prior results do not explicitly identify the specific local minimizer towards which the sequence of local FE minimizers converges. This ambiguity arises due to the nonconvex nature of the problem, which results in multiple potential candidates for the limits. We show that for every isolated local minimizer, there exists a sequence of local FE minimizers that strongly converges to it; \label{open:nonconvexity}
\item ($W^{1,p}$-type penalty). The strongest results currently found in the literature show that $\rho_h \to \rho$ strongly in $L^s(\Omega)$, for any $s \in [1,\infty)$. We prove the stronger result: $\rho_h \to \rho$ strongly in $W^{1,p}(\Omega)$; \label{open:regularization}
\item (Density filter). Previous results showed that the unfiltered material distribution $\rho_h \weakstar \rho$ weakly-* in $L^\infty(\Omega)$. We improve this result to $\rho_h \to \rho$ strongly $L^s(\Omega)$, for any $s \in [1,\infty)$; \label{open:filtering}
\item (Density filter). It is known that the discretized filtered material distribution $F_h(\rho_h) \to F(\rho)$ uniformly on $\Omega$ ($F$ and $F_h$ are defined in \cref{sec:model} and \cref{sec:fem}, respectively). We prove that there exists a sequence such that $F_h(\rho_h) \to F(\rho)$ strongly in $W^{1,q}(\Omega)$ for any $q \in [1,\infty)$. \label{open:filtering2}
\end{enumerate}
In both Theorems \labelcref{th:FEexistence} and \labelcref{th:FEexistence-filtering2} we tackle the issue of multiple local minimizers \labelcref{open:nonconvexity}. In Theorems \labelcref{th:FEexistence}, \labelcref{th:FEexistence-filtering2}, and  \labelcref{th:FEexistence-filtering} we prove \labelcref{open:regularization}, \labelcref{open:filtering}, and \labelcref{open:filtering2}, respectively.

Below we give a (perhaps non-exhaustive) list of the known FE convergence results. We emphasize that in all of the following results convergence is \emph{not} guaranteed to all isolated local minimizers (unless there only exists one local minimizer).

\underline{SIMP coupled with a $W^{1,p}$-type penalty method.}
\begin{itemize}
\itemsep=0pt
\item Petersson and Sigmund analyzed the primal formulation in the context of slope constraints \cite{Petersson1998}, which have a similar flavour to a $W^{1,\infty}$-type penalty \cite[Sec.~6.3]{Borrvall2001a}. Here the admissible set of material distributions is reduced to Lipschitz continuous functions and a pointwise bound is placed on $\partial_{x_i} \rho$ for $i=1,2$. They showed that a $(\mathrm{CG}_1)^2$ (continuous piecewise linear) discretization for $\vu$ and a $\mathrm{DG}_0$ (piecewise constant) discretization for $\rho$ converges to a local minimum of the problem. In particular $\vu_h \to \vu$ strongly in $H^1(\Omega)^2$, $\rho_h \to \rho$ uniformly in $\Omega$ and, therefore, $\rho_h \to \rho$ strongly in $L^\infty(\Omega)$. 

\item Petersson \cite{Petersson1999b} investigated a regularization in the flavour of a \\ $W^{1,1}(\Omega)$-penalty together with penalization terms such as $\epsilon^{-1} \int_\Omega \rho(1-\rho)\,\dx$. He considered a $(\mathrm{CG}_1)^2$ discretization for $\vu$ and a $\mathrm{DG}_0$ discretization for $\rho$. He showed that a sequence of local FE minimizers of the discretized \cref{cto}, $(\vu_h, \rho_h)$, converges to  a local minimizer  $(\vu,\rho)$ such that $(\vu_h, \rho_h) \to (\vu,\rho)$ strongly in $H^1(\Omega)^d \times L^s(\Omega)$, for any $s \in [1,\infty)$.

\item Talischi and Paulino studied a $W^{1,2}$-penalty \cite{Talischi2015}. They used a $(\mathrm{CG}_k)^2$ and $(\mathrm{CG}_k)$, $k\geq1$, for the discretization of $\vu$ and $\rho$, respectively. However, they allow for $\rho$ to be projected  onto  the space $\mathrm{DG}_0$ in the PDE constraint. They showed that there exists a sequence of local FE minimizers, $(\vu_h,\rho_h)$ converging to  a local minimizer  $(\vu,\rho)$ such that $\vu_h \to \vu$ strongly in $H^1(\Omega)^2$ and $\rho_h \weak \rho$ weakly in $H^1(\Omega)$. 
\end{itemize}

\underline{SIMP with density filtering.}
\begin{itemize}
\itemsep=0pt
\item In the context of linear density filtering, Bourdin showed that, for a $(\mathrm{CG}_1)^2$ discretization for $\vu$ and a $\mathrm{DG}_0$ discretization for $\rho$, the sequence of minimizers $(\vu_h,\rho_h)$ of the discretized \cref{cto} converges to  a local minimizer  $(\vu,\rho)$ such that $\vu_h \to \vu$ strongly in $H^1(\Omega)^d$, $F_h(\rho_h) \to F(\rho)$ uniformly on $\Omega$, and $\rho_h \weak \rho$ weakly in $L^s(\Omega)$ for any $s \in [1,\infty)$ \cite{Bourdin2001}.
\end{itemize}

\underline{Other results.}

\begin{itemize}
\itemsep=0pt
\item One of the first FE convergence results for the topology optimization of a linearly elastic material was shown by Petersson and Haslinger \cite{Petersson1998b}. They showed that a sequence of local FE minimizers, $(\vu_h, \rho_h)$, of a problem similar to the discretization of \cref{cto}, converge to a local minimizer $(\vu,\rho)$. In particular $\vu_h \to \vu$ strongly in $H^1(\Omega)^d$ and $\rho_h \weakstar \rho$ weakly-* in $L^\infty(\Omega)$.

\item  The problem reduces to the problem of the optimal variable thickness of sheets when $p_s=1$. Petersson analyzed such problems \cite{Petersson1999} and showed that, under a regularity and biaxiality assumption on the optimal stress field, the minimizer $\rho$ is unique. Moreover, for any conforming discretization of $\vu$ and a $\mathrm{DG}_0$ discretization for $\rho$, the minimizing sequence of unique FE minimizers converges to the unique minimizer $(\vu,\rho)$ such that $(\vu_h,\rho_h) \to (\vu,\rho)$ strongly in $H^1(\Omega)^2 \times L^s(\Omega)$ for any $s \in [1,\infty)$.

\item  Borrvall and Petersson \cite{Borrvall2001b} considered a problem with no filtering, $F(\rho) = \rho$, but with a regularization term $\mR(\rho)$ reminiscent of a density filter. For example $\mR(\rho) = g \circ \int_\Omega f(x-y) \rho(y) \, \dy$ where $g(\tilde{\rho}) = \int_\Omega (\tilde{\rho} - \underline{\rho}) (1-\tilde{\rho}) \, \dx$, $\underline{\rho} > 0$, and $\circ$ denotes the composition operator. They showed that, for any conforming discretization of $\vu$ and a $\mathrm{DG}_0$ discretization for $\rho$, there exists a sequence of local FE minimizers $\rho_h$ such that $\rho_h \weakstar \rho$ weakly-* in $L^\infty(\Omega)$ where $\rho$ is  a local minimizer  of the problem. This is subsequently strengthened to $\rho_h \to \rho$ strongly in  $L^s(\Omega_{\rho,b})$,  for any $s \in [1,\infty)$, where  $\Omega_{\rho, b} \coloneqq \{ {\bf x} \in \Omega : \rho({\bf x}) = 0 \; \text{or} \; \rho({\bf x}) = 1\}$,  if such a set  $\Omega_{\rho,b}$  exists and is measurable.

\item Greifenstein and Stingl \cite{Greifenstein2016} consider a material distribution that is partially  density  filtered and partially regularized with slope constraints  (as defined in \cite{Petersson1998}).  They show that there exists a sequence of FE  minimizers  that converge to a local  minimizer.  In particular $\vu_h \to \vu$ strongly in $H^1(\Omega)^d$, the unfiltered material distribution converges weakly-* in $L^\infty(\Omega)$, whereas the filtered and slope constrained material distribution both converge uniformly. For more details on how the hybrid formulation is designed and how $\vu$ depends on the material distribution, we refer the reader to \cite{Greifenstein2016}.
\end{itemize}

Although the above list might not be exhaustive, all works follow very similar arguments as Petersson and Sigmund \cite{Petersson1998} or Bourdin \cite{Bourdin2001} for penalty and density filtering techniques, respectively.

\subsection{Implications}
\label{sec:implications}
%\subsection{Compliance approximation with a $W^{1,p}$-type penalty}
%\label{sec:compliance-approximation}
%Recall that with a $W^{1,p}$-type penalty method, the compliance functional to minimize is $J(\vu,\rho) = l(\vu) + \frac{\epsilon}{p} \| \nabla \rho \|^p_{L^p(\Omega)} + m(\rho)$. Let $(\vu, \rho)$ denote a local minimizer that the FE method is approximating. As noted in \labelcref{open:regularization}, the best result to date showed that there exists sequences such that $\rho_h \weak \rho$ weakly in $W^{1,p}(\Omega)$ and $\rho_h \to \rho$ strongly in $L^s(\Omega)$ for any $s \in [1,\infty)$. Thanks to the weak lower semicontinuity of $J$, we have that
%\begin{align}
%J(\vu, \rho) \leq \liminf_{h \to 0} J(\vu_h, \rho_h).
%\end{align}
%Note that this does not automatically guarantee that $J(\vu_h, \rho_h) \to J(\vu,\rho)$. This means that it is possible that a FE discretization always overestimates the value of the compliance even in the limit as $h\to0$. This work shows that there exists a subsequence (up to relabelling) such that $\rho_h \to \rho$ strongly in $W^{1,p}(\Omega)$. Thus we now guarantee that $J(\vu_h, \rho_h) \to J(\vu,\rho)$, therefore converging to the true compliance value of the infinite-dimensional local minimizer.

\subsubsection{Multiple local minima} 
The optimization problem  is nonconvex and there may exist multiple local minima to the same optimization problem. In \cite[Sec.~4.5 \& 4.6]{Papadopoulos2021a}, there are examples of cantilevers and MBB beams (see \cref{ex:mbb}), coupled with a Ginzburg--Landau  penalty  method, where two local minima are found for each example.  All previous results only showed that a sequence of FE minimizers converge to a local minimizer of the infinite-dimensional problem. Our novel results guarantee strong convergence to all the isolated local minimizers. This is of critical importance in practical implementations.

\subsubsection{Checkerboard}
\label{sec:checkerboard}
We use the term \emph{checkerboarding} to describe the phenomenon where the discretized material distribution wildly oscillates in value between neighbouring cells in the mesh and this behaviour persists as the mesh size tends to zero \cite[Sec.~2]{Sigmund1998}. Terminating the mesh refinement at any point is problematic as, without careful post-processing, the discretized material distribution suggests designs which are not manufacturable. This may occur in the discretized material distribution of the SIMP model when a low-order discretization is used without a regularization method. The causes of checkerboarding are due to artificial stiffness granted by low-order finite elements  \cite{Diaz1995}  which in turn violate a discrete inf-sup condition of the discretized problem \cite{Jog1996}. Suitable regularization methods simultaneously guarantee the existence of a local minimizer to the SIMP model and prevent checkerboard patterns from persisting as $h \to 0$. 

Previous results in literature proved that a density filter ensures the existence of a sequence of unfiltered discretized material distributions such that $\rho_h \weakstar \rho$ weakly-* in $L^\infty(\Omega)$ where $\rho$ is an  unfiltered local material distribution minimizer of the infinite-dimensional problem. However, weak-* convergence is not sufficient to prevent a checkerboard pattern from forming and persisting. For instance, consider the one-dimensional checkerboard sequence $\phi_n \in L^\infty(0,1)$, $n \in \mathbb{N}$,
\begin{align}
\phi_n(x) =
\begin{cases}
1 & \text{if} \; j/n < x < (j+1/2)/n,\\
-1 & \text{if} \; (j+1/2)/n < x < (j+1/2)/n,\\
\end{cases}
\end{align}
where $j \in \{ 0, 1, \dots, n-1\}$. Then $\phi_n \weakstar 0$ weakly-* in $L^\infty(\Omega)$ \cite[Sec.~8.7]{Evans2010}. However $\phi_n \not\to 0$ strongly in $L^s(\Omega)$ for any $s \in [1,\infty]$ since $\| \phi_n \|_{L^s(\Omega)} = 1$ for all $n \in \mathbb{N}$. Thus, although weak-* convergence is not sufficient, by proving that there exists a  subsequence  (up to relabelling) where $\rho_h \to \rho$ strongly in $L^s(\Omega)$, for any $s \in [1,\infty)$, we guarantee that a checkerboard cannot persist as $h \to 0$ in both two and three dimensions \cite[Sec.~2]{Sigmund1998}. Although this fact was qualitatively observed in two dimensions \cite[Fig.~2 \& 3]{Bourdin2001}, it was not proven for the general class of density filters and FE methods we consider here. The unfiltered material distribution has sharper boundaries than the filtered material distribution and may be useful for inferring the eventual manufactured design.

\section{Finite element approximation}
\label{sec:fem}

 For the remainder of this work we assume that $\Omega \subset \mathbb{R}^d$ is a polygonal domain in two dimensions or a polyhedral  Lipschitz domain in three dimensions. Moreover, we assume that there exists a (local or global) isolated minimizer of \cref{cto} and fix one such \emph{local or global isolated minimizer}  $(\vect{u},\rho) \in \Hu \times \mH$ of \cref{cto}. 
 
Consider a family of quasi-uniform, non-degenerate,  and simplicial  meshes $(\mathcal{T}_h)$ as $h \to 0$ \cite[Def.~4.4.13]{Brenner2008}  such that $\cup_{K \in \mathcal{T}_h} K =   \bar \Omega$  for every $h$.  We choose conforming FE discretizations $\vX \subset H^1(\Omega)^d$, $\frh \subset \fr$, $\frFh \subset \frF$. To clarify, a FE discretization is conforming if the discretized space is a subspace of the infinite-dimensional space that it is discretizing.

In general, it will not be possible to represent the body and traction forces $\vf$, $\gn$, $\gtt$ exactly in the displacement FE space. Hence, for each $h$, we  instead consider $\vf_h$, $\gn_h$, $\gtt_h$ (which can be represented). We define the discretized functional:
\begin{align}
l_h(\vu_h) &\coloneqq  \int_{\Omega} \vf_h \cdot \vu_h \, \dx + \int_{\GNt} \gtt_h  (\vu_h)_t \,  \ds + \int_{\GNn} \gn_h  (\vu_h)_n  \, \ds.\label{complianceopt-h2}
\end{align}
We henceforth assume that:
\begin{enumerate}[label=({D}\arabic*)]
\item $\vf_h \to \vf$ strongly in $L^2(\Omega)^d$, $\gn_h \to \gn$ strongly in $L^2(\GNn)$, and $\gtt_h \to \gtt$ strongly in $L^2(\GNt)$. \label{ass:fh}
\end{enumerate}
Moreover,
\begin{enumerate}[label=({D}\arabic*)]
\setcounter{enumi}{1}
 \item The FE spaces are dense in their respective function spaces, i.e. for any $\vv \in H^1(\Omega)^d$,  $\eta \in \fr$, and $\theta \in \frF$, \label{ass:dense}
\begin{align*}
\begin{split}
\lim_{h \to 0} \inf_{\vect{w}_h \in \vX}\|\vv-\vect{w}_h\|_{H^1(\Omega)} &= \lim_{h\to 0} \inf_{\zeta_h \in \frh} \|\eta - \zeta_h \|_{W^{1,p}(\Omega)}\\
& = \lim_{h\to 0} \inf_{\zeta_h \in \frFh} \|\theta - \zeta_h \|_{L^2(\Omega)} = 0.
\end{split}
\end{align*}
\item $l(\vv) = l_h(\vv) =  0$ for all $\vv \in H^1_{ D }(\Omega)^d \cap \RM^d$. \label{ass:rigid-body}
\end{enumerate}
We denote the restriction of $\vX$ to the boundary conditions as:
\begin{align}
\vXD \coloneqq \vX \cap \Hu.
\end{align}

 \subsection{FE assumptions on the density filter} 
 Let $\mathcal{P}_k(\mathcal{T}_h) \subset W^{1,\infty}(\Omega)$, $k \geq 0$ denote the space of continuous piecewise polynomials of degree $k$ which are polynomials when restricted to each cell $K \in \mathcal{T}_h$.  In general, the filtered discretized material distribution $F(\rho_h) \not \in \mathcal{P}_k(\mathcal{T}_h)$. Hence, we are required to project $F(\rho_h)$ into the space $\mathcal{P}_k(\mathcal{T}_h)$. We denote the projection of $F(\rho_h)$  onto  $\mathcal{P}_k(\mathcal{T}_h)$ as $F_h(\rho_h) = \Pi_h F(\rho_h) \in \mathcal{P}_k(\mathcal{T}_h)$.  Here $\Pi_h$ denotes the projection operator cf.~\cite[Def.~2.3.9]{Brenner2008}.  We assume that:
\begin{enumerate}[label=({F}\arabic*)]
\setcounter{enumi}{4}
\item  For any $\eta \in  \frF$,  then $\|F_h(\eta) \|_{W^{s,q}(\Omega)} \leq C \|F(\eta) \|_{W^{s,q}(\Omega)}$, $s \in \{0,1\}$, $q \in [1,\infty]$, with $C$ independent of $h$; \label{ass:filtering3} 
\item The projection is a linear operator, i.e.~for any $u, v \in L^s(\Omega)$, $s\in [1,\infty]$, $a,b \in \mathbb{R}$, $\Pi_h(au+bv) = a\Pi_h u + b\Pi_h v$; \label{ass:filtering-projection2}
\item  For any $\eta \in  \frF$,  $\| F(\eta) - F_h(\eta)\|_{L^s(\Omega)} \to 0$ for any $s \in [1,\infty]$.   \label{ass:filtering-projection} %
\end{enumerate}
The interpolant  operator, as defined in \cite[Def.~3.3.9]{Brenner2008},  would satisfy the assumptions \labelcref{ass:filtering3}--\labelcref{ass:filtering-projection}, cf.~\cite[Th.~4.4.20]{Brenner2008} and \cite[Prop.~3.3.4]{Brenner2008}. 
Recall the definition of $\ffrF$ in \cref{def:ffr}. We define the FE space $\ffrFh$ as
\begin{align}
\ffrFh \coloneqq \{ \feta_h \in \frFh : \exists \; \eta_h \in \frFh \; \text{such that} \; \feta_h = F_h(\eta_h) \}. 
\end{align}
We make the following density assumption:
\begin{enumerate}[label=({D}\arabic*)]
\setcounter{enumi}{3}
\item The FE space $\ffrFh$ is dense in $\ffrF$, i.e.~for any $\feta \in \ffrF$, for some $q \in (1,\infty)$, \label{ass:dense:filtered}
\begin{align}
\lim_{h\to 0} \inf_{\tilde{\zeta}_h \in \ffrFh} \|\feta - \tilde{\zeta}_h  \|_{W^{1,q}(\Omega)} = 0.
\end{align}
\end{enumerate}

 \subsection{FE discretized optimization problem} 
The discretized compliance is denoted  by  $J_h$,
\begin{align}
J_h(\vu_h,\rho_h) &\coloneqq l_h(\vu_h) + \mathcal{R}(\rho_h),\label{complianceopt-h}
\end{align}
where $l_h$ is defined in \cref{complianceopt-h2}. We define the bilinear form
\begin{align}
a_h(\vu_h,\vv_h; \rho_h) \coloneqq \int_\Omega k(F_h(\rho_h)) \tE \vu_h : \tE \vv_h \, \dx.
\label{eq:weakformh}
\end{align}

\begin{definition}
\label{def:FEmin}
Consider the FE spaces $\vXD \subset H^1(\Omega)^d$ and $\mHh \subset \mH$. The \emph{FE discretized optimization problem} is to find $(\vect{u}_{h}, \rho_{h}) \in \vXD \times \mHh$ that satisfies,
\begin{align}
\begin{cases}
\min\limits_{(\vect{w}_h, \eta_h) \in \vXD \times \mHh}J_h(\vect{w}_h, \eta_h)\\
\text{subject to} \;\; a_h(\vect{w}_h,\vv_h; \eta_h) = l_h(\vv_h)\;\; \text{for all} \;\; \vv_h \in \vXD,
\end{cases}
 \label{ctoh} \tag{\text{SIMP$_h$}}
 \end{align}
where $J_h$, $l_h$, and $a_h(\cdot, \cdot; \cdot)$ are defined in \cref{complianceopt-h}, \cref{complianceopt-h2}, and \cref{eq:weakformh}, respectively. A local minimizer of \cref{ctoh} is called a \emph{local FE minimizer}.
\end{definition}

 \subsection{The main results} 

 The following three theorems are the core aims of this work.  In both Theorems \labelcref{th:FEexistence} and \labelcref{th:FEexistence-filtering2} we tackle the open issue of multiple local minimizers \labelcref{open:nonconvexity}. We prove that every isolated minimizer is arbitrarily closely approximated by the FE method. If a $W^{1,p}$-type  penalty  is used, in \cref{th:FEexistence}, we show that there exists a sequence of local FE minimizers satisfying $\vu_h \to \vu$ strongly in $H^1(\Omega)^d$ and $\rho_h \to \rho$ strongly in $W^{1,p}(\Omega)$. This resolves the open problem \labelcref{open:regularization}. If one chooses a density filter as a  regularization  method, then in \cref{th:FEexistence-filtering2}, we show that $\rho_h \to \rho$ strongly in $L^s(\Omega)$ for any $s\in[1,\infty)$,  resolving the open problem \labelcref{open:filtering}. Finally, we also prove that $F_h(\rho_h) \to F(\rho)$ strongly in $W^{1,q}(\Omega)$ as a result of \cref{th:FEexistence-filtering}. This observation resolves \labelcref{open:filtering2}.

\begin{theorem}[First main theorem: $W^{1,p}$-type penalty methods]
\label{th:FEexistence}
Suppose that $\mR$ is a $W^{1,p}$-type penalty functional for some $p \in (1,\infty)$ and $F(\rho) = \rho$. Consider the FE spaces $\vX \subset H^1(\Omega)^d$ and $\frh \subset \fr$. Suppose that \labelcref{ass:fh}--\labelcref{ass:rigid-body} hold. Fix an isolated local minimizer $(\vu, \rho) \in \Hu \times \fr$ of \cref{cto}. 
	
Then, as $h \to 0 $, there exists a sequence of local FE minimizers $(\vect{u}_{h}, \rho_{h}) \in \vXD \times \frh$ of \cref{ctoh} such that $\vu_h \to \vu$ strongly in $H^1(\Omega)^d$ and $\rho_h \to \rho$ strongly in $W^{1,p}(\Omega)$.
\end{theorem}

\begin{theorem}[Second main theorem: density filters]
\label{th:FEexistence-filtering2}
Suppose that a density filter is used that satisfies \labelcref{ass:filtering2}--\labelcref{ass:filtering-projection} and $\mR(\rho) = 0$. Consider the FE spaces $\vX \subset H^1(\Omega)^d$ and $\frFh \subset \frF$. Suppose that \labelcref{ass:fh}--\labelcref{ass:rigid-body} hold. Fix an isolated local minimizer $(\vu, \rho) \in \Hu \times \frF$ of \cref{cto}.
	
Then, as $h \to 0 $, there exists a sequence of local FE minimizers $(\vect{u}_{h}, \rho_{h}) \in \vXD \times \frFh$ of \cref{ctoh} such that $\vu_h \to \vu$ strongly in $H^1(\Omega)^d$ and $\rho_h \to \rho$ strongly in $L^s(\Omega)$ for any $s \in [1, \infty)$.
\end{theorem}

Recall that in the case of pure filtering \cref{cto} is equivalent to \cref{fcto}. Hence, if $(\vu,\frho)$ is the minimizer of \cref{fcto}, then $(\vu, \frho) = (\vu, F(\rho))$ where $(\vu, \rho)$ is the equivalent minimizer of \cref{cto}. %In particular, we denote the radius of the basin attraction of the displacement-filtered material distribution pair by $\tilde{r}$, i.e.~$(\vu,\frho)$ is the unique minimizer of \cref{fcto} in $B_{\tilde{r}, H^1(\Omega) \times W^{1,p}(\Omega)}(\vu,\rho) \cap (\Hu \times \ffr)$.

\begin{theorem}[Third main theorem: density filters]
\label{th:FEexistence-filtering}
Suppose that the conditions of \cref{th:FEexistence-filtering2} hold, the space of filtered material distributions is $W^{1,q}$-conforming, $\ffrFh \subset (\ffrF \cap W^{1,q}(\Omega))$, for some $q \in (1,\infty)$, and the density assumption of \labelcref{ass:dense:filtered} is valid.

Assume that the minimizer $(\vu, \frho) = (\vu, F(\rho)) \in \Hu \times \ffrF$ is an isolated minimizer of \cref{fcto}. %Let $(\vu_h, \rho_h)$ be a sequence of local FE minimizers of \cref{ctoh} such that $\vu_h \weak \vu$ weakly in $H^1(\Omega)^d$ and $\rho_h \weakstar \rho$ weakly-* in $L^\infty(\Omega)$. 

Then, there exists a subsequence (up to relabelling) such that
\begin{align}
\vu_h &\to \vu \; \text{strongly in} \; H^1(\Omega)^d,\\
F_h(\rho_h) &\to F(\rho) \; \text{strongly in} \; W^{1,q}(\Omega). 
\end{align}
\end{theorem}

%\begin{theorem}[Second main theorem for density filtering]
%\label{th:FEexistence-filtering}
%Suppose that a density filter is used that satisfies \labelcref{ass:filtering2}--\labelcref{ass:filtering} and \labelcref{ass:filtering-projection}. Consider the FE spaces $\vX \subset H^1(\Omega)^d$ and $\frh \subset \mH \cap W^{1,p}(\Omega)$ for some $p \in [1,\infty)$. 
%
%We define the filtered discretized topology optimization problem as finding $(\vu_h, \frho_h) \in \vXD \times \ffrh$ that satisfies, for all $\vv_h \in \vXD$:
%\begin{align}
%\min_{(\vu_h, \frho_h) \in \vXD \times \ffrh} \tilde{J}_h(\vu_h,\frho_h)  \;\; \text{subject to} \;\;
%\tilde{a}_h(\vu_h,\vv_h; \frho_h) = l_h(\vv_h),
%\label{fctoh} \tag{FCTO$_h$}
%\end{align}
%where $\tilde{J}_h(\vu_h,\frho_h) =  J_h(\vu_h,\rho_h)$ and $\tilde{a}_h(\vu_h,\vv_h; \frho_h) = a(\vu_h,\vv_h; \rho_h)$.
%	
%Then, there exists an $\bar h >0$ such that, for $h\leq\bar h$, $h \to 0 $, there exists a sequence of solutions $(\vect{u}_{h}, \frho_{h}) \in \vXD \times \ffrh$ that satisfies \cref{fctoh}. Moreover,  $\vu_h \to \vu$ strongly in $H^1(\Omega)^d$ and $\frho_h \to F(\rho)$ strongly in $L^\infty(\Omega)$. Furthermore, if $C_{\gamma,h} \subset W^{1,p}(\Omega)$ for some $p \in (1,\infty)$ and the assumption \labelcref{ass:filtering4} holds, then $\frho_h \to F(\rho)$ strongly in $W^{1,p}(\Omega)$.
%\end{theorem}

\subsection{Auxiliary results}
\label{sec:lemmas}

The following lemmas are useful results and are used later in the convergence analysis. Their proofs are provided in \cref{sec:app:proofs} for completeness.

\begin{lemma}
\label{lem:density:strong}
Consider a sequence  $(\eta_j) \subset L^\infty(\Omega)$,   $0 \leq  \eta_j  \leq 1$ a.e.~such that  $\eta_j  \to \eta$  strongly in $L^q(\Omega)$ for some $q \in [1,\infty)$. Then  $\eta_j  \to \eta$  strongly in $L^s(\Omega)$ for any $s \in [1,\infty)$.
\end{lemma}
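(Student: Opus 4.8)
The plan is to split the argument according to whether the target exponent $q$ lies below or above the given exponent $p$, exploiting the finiteness of $|\Omega|$ in the first regime and the uniform pointwise bound in the second. Both steps are elementary; the only structural point is the dichotomy itself, and the hypotheses are used in complementary ways.

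First I would record that the limit inherits the box constraint. Since $\eta_n \to \eta$ strongly in $L^p(\Omega)$, some subsequence converges a.e.\ to $\eta$; as $0 \le \eta_n \le 1$ a.e., the a.e.\ limit satisfies $0 \le \eta \le 1$ a.e.\ as well. Consequently $|\eta_n - \eta| \le 1$ a.e.\ for every $n$, which is the key inequality for the harder regime below.

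For $q \in [1,p]$, I would invoke the continuous embedding of $L^p(\Omega)$ into $L^q(\Omega)$ on the bounded domain: by H\"older's inequality,
\begin{align*}
\|\eta_n - \eta\|_{L^q(\Omega)} \le |\Omega|^{\frac{1}{q}-\frac{1}{p}}\, \|\eta_n - \eta\|_{L^p(\Omega)} \longrightarrow 0,
\end{align*}
so strong $L^p$ convergence automatically yields strong $L^q$ convergence. This step uses only $|\Omega| < \infty$ and does not require the $L^\infty$ bound at all.

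For $q \in (p,\infty)$, I would instead lean on the uniform bound. Writing $|\eta_n-\eta|^q = |\eta_n-\eta|^{q-p}\,|\eta_n-\eta|^p$ and using $|\eta_n-\eta| \le 1$ a.e.\ together with $q-p > 0$, one obtains $|\eta_n-\eta|^q \le |\eta_n-\eta|^p$ a.e.\ Integrating over $\Omega$ gives $\|\eta_n-\eta\|_{L^q(\Omega)}^q \le \|\eta_n-\eta\|_{L^p(\Omega)}^p \to 0$, whence $\|\eta_n-\eta\|_{L^q(\Omega)} \to 0$. The main (mild) obstacle is precisely this upgrade to larger exponents: strong $L^p$ convergence cannot in general be boosted to strong $L^q$ convergence for $q > p$, and it is exactly the uniform bound $0 \le \eta_n \le 1$ that makes the pointwise domination $|\eta_n-\eta|^{q-p} \le 1$ available and closes the gap. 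Combining the two regimes gives convergence for every $q \in [1,\infty)$.
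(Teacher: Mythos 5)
Your proof is correct and uses essentially the same two ingredients as the paper: H\"older's inequality on the finite-measure domain for exponents below $p$, and the pointwise domination $|\eta_n-\eta|\leq 1$ a.e.\ (from the box constraint) for exponents above $p$. The only cosmetic difference is that the paper funnels everything through $L^1(\Omega)$ first and then boosts up, whereas you split into the two regimes directly; your explicit justification that $0\leq\eta\leq 1$ a.e.\ via an a.e.-convergent subsequence is a detail the paper leaves implicit.
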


\begin{lemma}
\label{lem:filteringh}
Suppose that \labelcref{ass:filtering2}--\labelcref{ass:filtering-projection} hold. Consider any sequence $(\eta_h) \subset \frFh$ such that $\eta_h \weak \eta \in \frF$ weakly in $L^q(\Omega)$ ($\eta_h \weakstar \eta$ weakly-* in $L^\infty(\Omega)$ if $q=\infty$) for $q \in [1,\infty]$. Then $F_h(\eta_h)  \to F(\eta)$ strongly in $L^q(\Omega)$.
\end{lemma}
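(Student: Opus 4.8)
The plan is to bound the error $\|F_h(\eta_h) - F(\eta)\|_{L^p(\Omega)}$ by splitting off the projection (discretization) error from the filtering error. Inserting the intermediate quantity $F(\eta_h)$ and applying the triangle inequality,
\begin{align*}
\| F_h(\eta_h) - F(\eta) \|_{L^p(\Omega)} \le \| F_h(\eta_h) - F(\eta_h) \|_{L^p(\Omega)} + \| F(\eta_h) - F(\eta) \|_{L^p(\Omega)},
\end{align*}
where the first term is a pure projection error and the second is the convergence of the continuous filter along the weakly convergent sequence. I would treat the two terms with entirely different tools.

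For the projection term, since $\eta_h \in \frh \subseteq \fr$, assumption \labelcref{ass:filtering-projection} applies directly with $\eta = \eta_h$ and gives $\|F_h(\eta_h) - F(\eta_h)\|_{L^p(\Omega)} \le C h \|F(\eta_h)\|_{W^{1,p}(\Omega)}$. The only remaining task is to bound $\|F(\eta_h)\|_{W^{1,p}(\Omega)}$ uniformly in $h$, which follows from the uniform $W^{1,\infty}$ bound \labelcref{ass:filtering2}: on the bounded domain $\Omega$, the $L^\infty$ control of $F(\eta_h)$ and $\nabla F(\eta_h)$ yields the corresponding $L^p$ control for every finite $p$ (and the $p=\infty$ case is immediate). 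Hence this term is $O(h)$ and vanishes as $h \to 0$.

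The filtering term is where the real content lies. First I would observe that a weakly (respectively weakly-*) convergent sequence is norm-bounded, so $(\eta_h)$ is $L^p$-bounded and assumption \labelcref{ass:filtering} is applicable. The difficulty is that \labelcref{ass:filtering} only produces a \emph{subsequence} along which $F(\eta_h) \to F(\eta)$ strongly, whereas the lemma claims convergence of the entire sequence. I would close this gap with the subsequence principle: an arbitrary subsequence $(\eta_{h'})$ still satisfies $\eta_{h'} \weak \eta$ (resp.~$\eta_{h'} \weakstar \eta$), so \labelcref{ass:filtering} furnishes a further subsequence $(\eta_{h''})$ with $F(\eta_{h''}) \to F(\eta)$ strongly. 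Since every subsequence thus admits a further subsequence converging to the \emph{same} limit $F(\eta)$, the whole sequence converges, $F(\eta_h) \to F(\eta)$ strongly in $L^p(\Omega)$.

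Combining the two estimates yields $F_h(\eta_h) \to F(\eta)$ strongly in $L^p(\Omega)$, as claimed. I expect the main obstacle to be precisely the upgrade from the subsequential convergence supplied by \labelcref{ass:filtering} to convergence of the full sequence; the point that makes this legitimate is that the weak limit $\eta$, and therefore $F(\eta)$, is uniquely determined, so all further subsequences share a common limit. The projection estimate, by contrast, is routine once the uniform bound \labelcref{ass:filtering2} is in hand.
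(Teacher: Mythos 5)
Your proposal is correct and takes essentially the same route as the paper's own proof: the identical triangle-inequality splitting through the intermediate term $F(\eta_h)$, the same $O(h)$ projection estimate obtained from \labelcref{ass:filtering2} and \labelcref{ass:filtering-projection} with the uniform $W^{1,\infty}$ bound supplying a constant independent of $\eta_h$, and the same appeal to \labelcref{ass:filtering} for the filter term. Your explicit use of the subsequence principle to upgrade the merely subsequential convergence guaranteed by \labelcref{ass:filtering} to convergence of the full sequence is in fact slightly more careful than the paper, which passes over this point silently.
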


\begin{lemma}
\label{lem:weakpde}
Suppose that \labelcref{ass:fh}--\labelcref{ass:rigid-body} hold. Consider the following two cases:

\smallskip
\noindent \textbf{\emph{(Case 1:  $W^{1,p}$-type penalty).}} Suppose that $\mR$ is a $W^{1,p}$-type penalty functional, and $F(\rho) = \rho$. Consider any sequence $(\hat{\vu}_h, \hat{\rho}_h) \in \vXD \times \frh$ satisfying the PDE constraint in \cref{ctoh} such that $\hat{\vu}_h \weak \hat{\vu} \in \Hu$ weakly in $H^1(\Omega)^d$. Moreover, suppose that $\hat{\rho}_h \to \hat{\rho} \in \fr$ strongly in $L^q(\Omega)$ for some $q \in [2,\infty)$. Then the limit $(\hat{\vu}, \hat{\rho})$ satisfies the PDE constraint in \cref{cto}. 

\smallskip
\noindent \textbf{\emph{(Case 2:  density filtering).}} Suppose that a density filter is used that satisfies \labelcref{ass:filtering2}--\labelcref{ass:filtering-projection}, and $\mR(\rho) = 0$. Consider any sequence $(\hat{\vu}_h, \hat{\rho}_h) \in \vXD \times \frFh$ satisfying the PDE constraint in \cref{ctoh} such that $\hat{\vu}_h \weak \hat{\vu} \in \Hu$ weakly in $H^1(\Omega)^d$. Moreover, suppose that $\hat{\rho}_h \weak \hat{\rho}$ weakly in $L^q(\Omega)$ for some $q \in [2,\infty)$ ($\hat{\rho}_h \weakstar \hat{\rho}$ weakly-* in $L^\infty(\Omega)$ if $q = \infty$). Then the limit $(\hat{\vu}, \hat{\rho})$ satisfies the PDE constraint in \cref{cto}. 
\end{lemma}
Results similar to \cref{lem:weakpde} are found in the literature e.g.~\cite[Lem.~2.1]{Petersson1999b}.

\begin{lemma}[Existence of a strongly converging sequence $\hat{\vu}_h$ in $H^1(\Omega)^d$]
\label{lem:primal:strong:uh}
Suppose that \labelcref{ass:fh}--\labelcref{ass:rigid-body} hold. Consider a pair $(\vu, \rho) \in \Hu \times \mH$ that satisfies the PDE constraint in \cref{cto}. Consider the following two cases:

\smallskip
\noindent \textbf{\emph{(Case 1).}}  Suppose that $F(\rho) = \rho$. Consider any sequence such that $\hat{\rho}_h \to \rho$ strongly in $L^q(\Omega)$, for some $q \in [2,\infty]$, and the corresponding (unique) sequence of elastic displacements such that $(\hat{\vu}_h, \hat{\rho}_h)$ satisfies the PDE constraint in \cref{ctoh}. Then, $\hat{\vu}_h \to \vu$ strongly in $H^1(\Omega)^d$.

\smallskip
\noindent \textbf{\emph{(Case 2).}} Suppose that a density filter is used that satisfies \labelcref{ass:filtering2}--\labelcref{ass:filtering-projection}. Consider any sequence such that $F_h(\hat{\rho}_h) \to F(\rho)$  strongly in $L^q(\Omega)$, for some $q \in [2,\infty]$, and the corresponding (unique) sequence of elastic displacements such that $(\hat{\vu}_h, \hat{\rho}_h)$ satisfies the PDE constraint in \cref{ctoh}. Then, $\hat{\vu}_h \to \vu$ strongly in $H^1(\Omega)^d$.
\end{lemma}

\subsection{FE analysis: $W^{1,p}$-type  penalty methods }
\label{sec:regularization} 
In this subsection we assume that   a $W^{1,p}$-type  penalty is used, with functional $\mR$, as defined in \cref{def:w1p} and  no density filter is used, i.e.~$F(\eta) = F_h(\eta) = \eta$. 
Fix a local isolated minimizer  $(\vect{u},\rho) \in \Hu \times \fr$ of \cref{cto}.  Consider the finite-dimensional optimization problem: find $(\vu_h,\rho_h) \in \vXD \times \frh$ that minimizes 
\begin{align}
\begin{split}
\begin{cases}
\min\limits_{(\vv_h,\eta_h) \in (\vXD \cap B_{r/2,H^1(\Omega)}(\vu)) \times (\frh \cap B_{r/2,W^{1,p}(\Omega)}(\rho))} J_h(\vect{v}_h, \eta_h), \\
\text{subject to}\; a_h(\vu_h,\vv_h; \rho_h) = l_h(\vv_h) \;\; \text{for all} \; \vv_h \in \vXD.
\end{cases}
\end{split} \tag{\text{I-SIMP${}^p_{h}$}} \label{ictoh}
\end{align}
If there exists a pair $(\vu_h, \rho_h) \in (\vXD \cap B_{r/2,H^1(\Omega)}(\vu)) \times (\frh \cap B_{r/2,W^{1,p}(\Omega)}(\rho))$ such that $J_h(\vu_h, \rho_h) \leq J_h(\vv_h, \eta_h)$ for all $(\vv_h, \eta_h) \in (\vXD \cap B_{r/2,H^1(\Omega)}(\vu)) \times (\frh \cap B_{r/2,W^{1,p}(\Omega)}(\rho))$, then we call $(\vu_h, \rho_h)$ a \emph{global} FE minimizer.

In \cref{prop:FEexistence}, we prove the existence of finite-dimensional global minimizers to the discretized problem \cref{ictoh}.  Then, in \cref{prop:FEconvergence}, we prove weak convergence of the discretized global minimizers to the isolated infinite-dimensional local minimizer as $h \to 0$. Direct consequences of the weak convergence are derived in Corollaries \labelcref{cor:primal:reg:strong} and \labelcref{cor:primal:strong:uh}.
We derive a strong convergence result in \cref{prop:FE:regularization} and conclude this subsection by proving the result of \cref{th:FEexistence}.

\begin{proposition}
\label{prop:FEexistence}
Suppose that the conditions of \cref{th:FEexistence} hold. Then a  global FE minimizer $(\vu_h,\rho_h) \in \vXD \times \frh$  of \cref{ictoh} exists.
\end{proposition}
\begin{proof}
The set $(\vXD \cap B_{r/2,H^1(\Omega)}(\vu)) \times (\frh \cap B_{r/2, W^{1,p}(\Omega) }(\rho))$ is a finite-dimensional, closed, and bounded set. Moreover, for sufficiently small $h$ it is non-empty. Therefore, it is sequentially compact by the Heine--Borel theorem  \cite[Th.~11.18]{Fitzpatrick2009}. We say that $(\vv,\eta) \in \mS$ if $(\vv,\eta)$ satisfies the PDE constraint in \cref{ictoh}. By Korn's inequality \cite[Th.~11]{Bauer2016}, and the assumption that $\epsilon_{\text{SIMP}}>0$, then the bilinear form $a_h(\cdot,\cdot; \eta_h)$ is coercive for any $\eta_h \in \frh$. Moreover, the bilinear form is bounded,  \labelcref{ass:rigid-body} asserts that  $l_h(\vv) = 0$ for all $\vv \in  H^1(\Omega)^d \cap \RM^d$, and therefore, by the Lax--Milgram theorem \cite[Ch.~6.2, Th.~1]{Evans2010}, for every $\eta_h \in \frh$, there exists a unique $\vv_h \in \vXD$ such that $(\vv_h,\eta_h) \in \mS$. Moreover,  $\|\vv_h\|_{H^1(\Omega)}$ is bounded thanks to the boundedness of $\|\vf_h\|_{L^2(\Omega)}$, $\|\gn_h\|_{L^2(\Gamma^n_N)}$, and $\|\gtt_h\|_{L^2(\Gamma^t_N)}$.  Hence, $\mS \cap (\vXD \times \frh)$ is finite-dimensional, closed, and bounded and thus sequentially compact by the Heine--Borel theorem. As the spaces are Hausdorff, the intersection of two compact sets is compact and, hence, $\mS \cap  ((\vXD \cap B_{r/2,H^1(\Omega)}(\vu)) \times (\frh \cap B_{r/2, W^{1,p}(\Omega) }(\rho)))$ is sequentially compact.

By assumption the functional $J_h$ is continuous  in the strong topology of the finite-dimensional space $\vXD \times \frh$.  Hence $J_h$ obtains its infimum in $\mS \cap  ((\vXD \cap B_{r/2,H^1(\Omega)}(\vu)) \times (\frh \cap B_{r/2, W^{1,p}(\Omega) }(\rho)))$ and therefore, a  global FE minimizer  $(\vu_h, \rho_h)$ of \cref{ictoh} exists. 
\end{proof}

The following proposition is the first step in tackling the open problem \labelcref{open:nonconvexity}.

\begin{proposition}[Weak$\times$weak convergence of ($\vu_h, \rho_h)$ in $H^1(\Omega)^d \times W^{1,p}(\Omega)$]
\label{prop:FEconvergence}
Suppose that the conditions of \cref{th:FEexistence} hold.  Then there exists a subsequence of the  global FE minimizers  of \cref{ictoh}, that satisfies
\begin{align}
\vect{u}_h  &\weak \vect{u} \; \text{weakly in} \; H^1(\Omega)^d,\\
%\vect{u}_h &\to \vect{u} \; \text{strongly in} \; C^d,\\
 \rho_h  &\weak \rho \; \text{weakly in} \;  W^{1,p}(\Omega).  
\end{align}
\end{proposition}
\begin{proof}
By a corollary of Kakutani's Theorem \cite[Th.~A.65]{Fonseca2006}, if a Banach space is reflexive then every norm-closed, bounded and convex subset of the Banach space is weakly compact and thus, by the Eberlein--\v{S}mulian theorem \cite[Th.~A.62]{Fonseca2006}, sequentially weakly compact. It can be checked that $\Hu \cap B_{r/2,H^1(\Omega)}(\vu)$ and $\fr \cap B_{r/2, W^{1,p}(\Omega) }(\rho)$ are norm-closed, bounded and convex subsets of the reflexive Banach spaces $H^1(\Omega)^d$ and $ W^{1,p}(\Omega) $, respectively. Therefore,  $\Hu \cap B_{r/2,H^1(\Omega)}(\vu)$ is weakly sequentially compact in $H^1(\Omega)^d$  and $\fr \cap B_{r/2, W^{1,p}(\Omega) }(\rho)$ is weakly sequentially compact in $ W^{1,p}(\Omega) $. 

Hence we extract subsequences (up to relabelling), $(\vu_h)$ and $(\rho_h)$ of the sequence generated by the global FE minimizers of \cref{ictoh} such that
\begin{align}
\vu_h &\weak \vu_0 \in \Hu \cap B_{r/2,H^1(\Omega)}(\vu) \; \text{weakly in} \; H^1(\Omega)^d,\\
\rho_h &\weak \rho_0 \in \fr \cap B_{r/2, W^{1,p}(\Omega) }(\rho) \; \text{weakly in} \;  W^{1,p}(\Omega) .
\end{align}
In the remainder of the proof, we show that the weak limit $(\vu_0, \rho_0)$ is in fact the isolated minimizer $(\vu,\rho)$. By the Rellich--Kondrachov theorem,  there exists a subsequence (up to relabelling) that satisfies
\begin{align}
\rho_h \to \rho_0 \;\; \text{strongly in} \;\; L^1(\Omega)
\label{eq:rellich}
\end{align} 
and thus $\rho_h \to \rho_0$ strongly in $L^s(\Omega)$ for any $s \in [1,\infty)$ by \cref{lem:density:strong}. Hence, we satisfy the requirement of \cref{lem:weakpde} and deduce that $(\vu_0, \rho_0)$ satisfies the PDE constraint in \cref{cto}.

The next step is to prove that $(\vu_0, \rho_0)$ is a minimizing pair. By assumption \labelcref{ass:dense}, there exists a sequence of FE functions $\hat{\rho}_h \in \frh$ that strongly converges to  $\rho$ in $ W^{1,p}(\Omega) $. Moreover, by \cref{lem:primal:strong:uh}, there exists a sequence ($\hat{\vu}_h)$ such that $(\hat{\vu}_h, \hat{\rho}_h)$ satisfies the PDE constraint in \cref{ictoh} and $\hat{\vu}_h \to \vu$ strongly in $H^1(\Omega)^d$. Thus 
\begin{align}
|J_h(\hat{\vu}_h,\hat{\rho}_h) -  J(\vu,\rho)| \leq |l_h(\hat{\vu}_h) - l(\vu)| + |\mR(\rho) - \mR(\hat{\rho}_h)|.
\end{align}
By \labelcref{ass:fh}, we have strong convergence of $\vf_h$ in $L^2(\Omega)$, $\gn_h$ in $L^2(\GNn)$, and $\gtt_h$ in $L^2(\GNt)$. Thus together with the strong convergence of $\hat{\rho}_h$ and $\hat{\vu}_h$ we see that
\begin{align*}
J_h(\hat{\vu}_h,\hat{\rho}_h) \to  J(\vu,\rho)  \;\; \text{as} \;\; h \to 0. 
\end{align*}
Furthermore, for sufficiently small $h >0$,
\begin{align*}
(\hat{\vu}_h, \hat{\rho}_h) \in (\vXD \cap B_{r/2,H^1(\Omega)}(\vu)) \times (\frh \cap B_{r/2, W^{1,p}(\Omega) }(\rho)).
\end{align*}
Therefore,
\begin{align}
J_h(\vu_h,\rho_h) \leq J_h(\hat{\vu}_h, \hat{\rho}_h).
\end{align}
By taking the limit as $h \to 0$ and utilizing the strong convergence of $\hat{\vu}_h$ and $\hat{\rho}_h$ to $\vu$ and $\rho$, respectively, we see that
\begin{align}
 \limsup_{h \to 0}  J_h(\vu_h, \rho_h) \leq J(\vu, \rho). \label{eq:femapprox1}
\end{align}
Recall that $J(\vu,\rho) = l(\vu) + \mR(\rho)$. Then by the linearity of $l(\vu)$ and \cref{def:w1p},  $J$ is weakly lower semicontinuous on $H^1(\Omega)^d \times  W^{1,p}(\Omega) $. Therefore,
\begin{align}
J(\vu_0,\rho_0) \leq \liminf_{h \to 0} J_h(\vu_h, \rho_h). \label{lowersemi}
\end{align}
$(\vu,\rho)$ is the unique 	local minimizer  of \cref{cto} in  $ (\Hu \cap B_{r/2,H^1(\Omega)}(\vu)) \times (\fr \cap B_{r/2, W^{1,p}(\Omega) }(\rho))$. Thus $J(\vu,\rho) \leq J(\vu_0,\rho_0)$. Hence, from \cref{eq:femapprox1} and \cref{lowersemi}, it follows that
\begin{align}
J(\vu_0,\rho_0) = J(\vu, \rho).
\end{align} 
Since $(\vu, \rho)$ is the unique local minimizer in the spaces we consider and $(\vu_0, \rho_0)$ satisfies the PDE constraint in \cref{cto}, we identify the limits $\vu_0$ and $\rho_0$ as $\vu$ and $\rho$, respectively, and state that $\vu_h \weak \vu$ weakly in $H^1(\Omega)^d$ and $\rho_h \weak \rho$ weakly in $ W^{1,p}(\Omega) $.
\end{proof}

\begin{corollary}
\label{cor:primal:reg:strong}
 Suppose that the conditions of \cref{th:FEexistence} hold.  Then there exists a subsequence of the  global FE minimizers  $(\vu_h,\rho_h)$ of \cref{ictoh} (up to relabelling) such that $\rho_h \to \rho$ strongly in $L^s(\Omega)$ for any $s \in [1,\infty)$.
\end{corollary}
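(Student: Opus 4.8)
The plan is to prove Corollary~\labelcref{cor:primal:reg:strong} by upgrading the weak convergence $\rho_h \weak \rho$ in $W^{1,p}(\Omega)$ (established in \cref{prop:FEMconvergence}) to strong convergence in $L^s(\Omega)$ via a compact embedding, and then to sharpen the range of exponents using \cref{lem:density:strong}. First I would recall that when a $W^{1,p}$-type regularization is used, we have $Y = W^{1,p}(\Omega)$, and \cref{prop:FEMconvergence} gives a subsequence with $\rho_h \weak \rho$ weakly in $W^{1,p}(\Omega)$. The Rellich--Kondrachov theorem guarantees that the embedding $W^{1,p}(\Omega) \hookrightarrow L^1(\Omega)$ is compact (for a bounded Lipschitz domain $\Omega \subset \mathbb{R}^d$ this holds for every $p \in (1,\infty)$, indeed the embedding into $L^1$ is compact for all such $p$ since $L^1$ sits below any Sobolev conjugate exponent). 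Consequently the weakly convergent sequence $(\rho_h)$ admits a subsequence converging strongly in $L^1(\Omega)$, and by uniqueness of limits the strong $L^1$-limit must coincide with the weak limit $\rho$.

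Having secured strong convergence in $L^1(\Omega)$, I would invoke \cref{lem:density:strong}. The sequence $(\rho_h)$ lies in $\fr = \mH \cap W^{1,p}(\Omega)$, so in particular each $\rho_h$ satisfies the box constraint $0 \leq \rho_h \leq 1$ a.e., and the same holds for $\rho$. Thus the hypotheses of \cref{lem:density:strong} are met: a sequence in $L^\infty(\Omega)$ with values in $[0,1]$ that converges strongly in $L^p(\Omega)$ for one exponent $p \in [1,\infty)$ converges strongly in $L^s(\Omega)$ for every $s \in [1,\infty)$. Applying this with $p = 1$ immediately yields $\rho_h \to \rho$ strongly in $L^s(\Omega)$ for all $s \in [1,\infty)$, which is precisely the claim.

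The only subtlety worth stating carefully is the passage to subsequences and the identification of the limit. \cref{prop:FEMconvergence} already relabels to a weakly convergent subsequence; the Rellich--Kondrachov argument extracts a further subsequence converging strongly in $L^1$. Because the weak $W^{1,p}$-limit and the strong $L^1$-limit of this nested subsequence must agree (weak $W^{1,p}$-convergence implies weak $L^1$-convergence, and strong $L^1$-convergence implies the same weak limit), the strong limit is $\rho$. I would make explicit that no genuine obstacle arises here: the statement is essentially a bookkeeping consequence of compact embedding plus the boundedness lemma. The main (minor) point to handle cleanly is simply to note that the corollary is stated up to a non-relabeled subsequence, which is consistent with the Rellich--Kondrachov extraction, so there is no need to argue that the \emph{entire} sequence converges strongly.
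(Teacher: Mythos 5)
Your proposal is correct and follows exactly the paper's own argument: weak $W^{1,p}(\Omega)$ convergence from \cref{prop:FEMconvergence}, the Rellich--Kondrachov theorem to extract a subsequence converging strongly in $L^1(\Omega)$, and \cref{lem:density:strong} to upgrade to strong convergence in $L^s(\Omega)$ for all $s \in [1,\infty)$. The extra care you take with identifying the weak and strong limits and with the subsequence bookkeeping is sound but does not change the substance of the argument.
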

\begin{proof}
By \cref{prop:FEconvergence}, $\rho_h \weak \rho$ weakly in $W^{1,p}(\Omega)$. Thus, for $p \geq 1$, the Rellich--Kondrachov theorem implies that there exists a subsequence such that $\rho_h \to \rho$ strongly in $L^1(\Omega)$. The result follows from \cref{lem:density:strong}.
\end{proof}

\begin{corollary}[Strong convergence of $\vu_h$ in $H^1(\Omega)^d$]
\label{cor:primal:strong:uh}
 Suppose that the conditions of \cref{th:FEexistence} hold.  There exists a subsequence of the   global FE minimizers  $(\vu_h,\rho_h)$ of \cref{ictoh} such that $\vu_h \to \vu$ strongly in $H^1(\Omega)^d$.
\end{corollary}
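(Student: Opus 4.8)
The plan is to apply \cref{lem:primal:strong:uh} directly to the sequence of minimizers $(\vu_h,\rho_h)$ itself. By construction, every minimizer of \cref{ictoh} satisfies the same PDE constraint as \cref{ctoh}, namely $a_h(\vu_h,\vv_h;\rho_h) = (\vf_h,\vv_h)_{L^2(\Gamma_N)}$ for all $\vv_h \in \vXD$, and, by the Lax--Milgram argument recorded in \cref{prop:FEMexistence}, $\vu_h$ is the unique displacement associated with $\rho_h$. Hence the only remaining hypothesis of \cref{lem:primal:strong:uh} to be verified is the strong convergence of either $F_h(\rho_h)$ or $\rho_h$ in some $L^p(\Omega)$ with $p \in [2,\infty]$, and this is precisely what Corollaries \labelcref{cor:primal:reg:weak} and \labelcref{cor:primal:reg:strong} supply.

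I would then split the argument according to the restriction method. If a filter is used, \cref{cor:primal:reg:weak} gives $F_h(\rho_h) \to F(\rho)$ strongly in $L^\infty(\Omega)$, so the hypothesis of \cref{lem:primal:strong:uh} holds with $p = \infty$. If instead a $W^{1,p}$-type regularization is used, \cref{cor:primal:reg:strong} furnishes a (relabeled) subsequence along which $\rho_h \to \rho$ strongly in $L^s(\Omega)$ for every $s \in [1,\infty)$; taking $s = 2$ meets the hypothesis of \cref{lem:primal:strong:uh} with $p = 2$. In both cases \cref{lem:primal:strong:uh} then yields $\vu_h \to \vu$ strongly in $H^1(\Omega)^d$, which is the claim.

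I do not anticipate any genuine obstacle here: the substantive work is already packaged inside \cref{lem:primal:strong:uh}, together with the weak(-*) convergence established in \cref{prop:FEMconvergence}. The only points requiring a little care are the bookkeeping of the subsequence extracted in the regularized case — which is exactly why the corollary is stated for a subsequence rather than the full sequence — and the observation that $L^\infty(\Omega)$ convergence in the filtered case is subsumed under the $p = \infty$ branch of the lemma. Everything else is direct citation of the preceding results.
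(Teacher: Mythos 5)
Your proposal is correct and matches the paper's own proof: both split into the filtering and regularization cases, invoke Corollaries \labelcref{cor:primal:reg:weak} and \labelcref{cor:primal:reg:strong} respectively to obtain strong convergence of $F_h(\rho_h)$ in $L^\infty(\Omega)$ or of $\rho_h$ in $L^2(\Omega)$, and then conclude via \cref{lem:primal:strong:uh}. The additional remarks on the PDE constraint, uniqueness of the displacement, and subsequence bookkeeping are sound but not needed beyond what the paper states.
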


\begin{proof}
By \cref{cor:primal:reg:strong}, we have that $\rho_h \to \rho$ strongly in $L^2(\Omega)$. Hence, the result follows as a direct consequence of (Case 1) in \cref{lem:primal:strong:uh}.
\end{proof}

We now focus on proving the stronger results of \cref{th:FEexistence}. We show that there exists a subsequence (up to relabelling) such that $\rho_h \to \rho$ strongly in $W^{1,p}(\Omega)$.

\begin{proposition}[Strong convergence of $\rho_h$ in $W^{1,p}(\Omega)$, $p\in(1,\infty)$]
\label{prop:FE:regularization}
 Suppose that the conditions of \cref{th:FEexistence} hold.  Then, there exists a subsequence (up to relabelling) of   global FE minimizers  of \cref{ictoh}, that satisfies
\begin{align}
\rho_h \to \rho \; \text{strongly in} \; W^{1,p}(\Omega). 
\end{align}
\end{proposition}

\begin{proof}
By \labelcref{ass:dense}, there exists a sequence of functions $(\hat{\rho}_h)$ such that $\hat{\rho}_h \to \rho$ strongly in $W^{1,p}(\Omega)$.  Moreover, by (Case 1) in \cref{lem:primal:strong:uh} the corresponding sequence of elastic displacements $(\hat{\vu}_h)$ such that the pair $(\hat{\vu}_h,\hat{\rho}_h)$ satisfies the PDE constraint in \cref{ctoh} also satisfies $\hat{\vu}_h \to \vu$ strongly in $H^1(\Omega)^d$. Thanks to the strong convergence, for sufficiently small $h$, we have that $(\hat{\vu}_h,\hat{\rho}_h) \in (\vXD \cap B_{r/2,H^1(\Omega)}(\vu)) \times (\frh \cap B_{r/2,W^{1,p}(\Omega)}(\rho))$. Hence
\begin{align}
J(\vu, \rho) \leq J_h(\vu_h, \rho_h) \leq J_h(\hat{\vu}_h,\hat{\rho}_h).
\end{align}
In turn this implies that
\begin{align}
\begin{split}
&\frac{\epsilon}{p} \left| \|\nabla \rho\|^p_{L^p(\Omega)} - \|\nabla \rho_h\|^p_{L^p(\Omega)} \right|\\
& \indent \leq |J_h(\hat{\vu}_h,\hat{\rho}_h) - J(\vu, \rho)|
+ |l(\vu) - l_h(\vu_h)|
+ |m(\rho) - m(\rho_h)|.
\end{split} \label{eq:w1pstrong1}
\end{align}
Label the three terms on the right-hand side of \cref{eq:w1pstrong1} by (I), (II), and (III), respectively. Thanks to the strong convergence of $(\hat{\vu}_h,\hat{\rho}_h)$ in $H^1(\Omega)^d \times W^{1,p}(\Omega)$ then $\mathrm{(I)} \to 0$. Similarly due to \labelcref{ass:fh} and \cref{cor:primal:strong:uh}, we have that $\mathrm{(II)} \to 0$. Finally since, by assumption, $m(\cdot)$ is continuous in $L^p(\Omega)$,  \cref{cor:primal:reg:strong}  implies that $\mathrm{(III)} \to 0$. Therefore, $\|\nabla \rho_h\|^p_{L^p(\Omega)} \to \|\nabla \rho\|^p_{L^p(\Omega)}$ and consequently 
\begin{align}
\|\nabla \rho_h\|_{L^p(\Omega)} \to \|\nabla \rho\|_{L^p(\Omega)}.
\label{eq:w1pstrong2}
\end{align}
Whenever $p \in (1,\infty)$, $L^p(\Omega)$ is a uniformly convex Banach space. Hence, by the fact that $\rho_h \weak \rho$ weakly in $W^{1,p}(\Omega)$ (as shown in \cref{prop:FEconvergence}) and \cref{eq:w1pstrong2} holds, the Radon--Riesz property \cite[Th.~A.70]{Fonseca2006} implies that $\nabla \rho_h \to \nabla \rho$ strongly in $L^p(\Omega)$. Consequently $\rho_h \to \rho$ strongly in $W^{1,p}(\Omega)$. 
\end{proof}

\begin{corollary}
\label{cor:icto2cto:1}
 Suppose that the conditions of \cref{th:FEexistence} hold.  Then for sufficiently small $h$, there exists a subsequence of   global FE minimizers  $(\vu_h,\rho_h)$ of \cref{ictoh} that are also  local FE  minimizers  of \cref{ctoh}.
\end{corollary}
\begin{proof}
\cref{cor:primal:strong:uh} and \cref{prop:FE:regularization} imply that there exists a subsequence $(\vu_h,\rho_h)$ such that  $(\vu_h, \rho_h) \to (\vu,\rho)$ strongly in $H^1(\Omega)^d \times W^{1,p}(\Omega)$.  By the definition of strong convergence, there exists an $\bar h$ such that for all $h \leq \bar h$, $\| \vu - \vu_h\|_{H^1(\Omega)} + \| \rho - \rho_h\|_{W^{1,p}(\Omega)} \leq r/4$. Thus the basin of attraction constraint is not active and the subsequence of  global FE minimizers  of \cref{ictoh} are also  local FE minimizers  of \cref{ctoh}.
\end{proof}

We now have the sufficient ingredients to prove \cref{th:FEexistence}.

\begin{proof}[Proof of \cref{th:FEexistence}]
By \cref{prop:FEexistence}, there exists a sequence of  global FE minimizers  $(\vu_h, \rho_h)$ of \cref{ictoh} that satisfies $(\vu_h, \rho_h) \weak (\vu,\rho)$ weakly in $H^1(\Omega)^d \times W^{1,p}(\Omega)$. In \cref{cor:primal:strong:uh}, it was shown that there exists a subsequence (up to relabelling) such that $\vu_h \to \vu$ strongly in $H^1(\Omega)^d$. Then, in \cref{prop:FE:regularization}, we demonstrate that there exists a subsequence (up to relabelling) such that $\rho_h \to \rho$ strongly in $W^{1,p}(\Omega)$. Finally, in \cref{cor:icto2cto:1}, we conclude that a subsequence of the  global FE minimizers are also  local FE minimizers  of \cref{ctoh}. 
\end{proof}

\subsection{FE analysis: density filtering}
\label{sec:filtering}

In this subsection  we assume a density filter is applied, with a mapping $F$ that satisfies \labelcref{ass:filtering2}--\labelcref{ass:filtering-projection},  and that no penalty method is used, i.e.~$\mR \equiv 0$. Fix an isolated local minimizer  $(\vect{u},\rho) \in \Hu \times \frF$ of \cref{cto}. Consider the finite-dimensional optimization problem: find $(\vu_h,\rho_h) \in \vXD \times \frFh$ that minimizes 
\begin{align}
\begin{split}
\begin{cases}
\min\limits_{(\vv_h,\eta_h) \in (\vXD \cap B_{r/2,H^1(\Omega)}(\vu)) \times (\frFh \cap B_{r/2,L^2(\Omega)}(\rho))} J_h(\vect{v}_h, \eta_h), \\
\text{subject to}\; a_h(\vu_h,\vv_h; \rho_h) = l_h(\vv_h) \;\; \text{for any} \; \vv_h \in \vXD.
\end{cases}
\end{split} \tag{\text{I-SIMP${}^F_{h}$}} \label{ictoFh}
\end{align}

In \cref{prop:FEexistence:filtering}, we prove the existence of a finite-dimensional global minimizer to the discretized problem \cref{ictoFh}. Then, in \cref{prop:FEconvergence:filtering}, we prove weak convergence of the discretized minimizers to the isolated infinite-dimensional local minimizer as $h \to 0$. Direct consequences of the weak convergence are derived in Corollaries \labelcref{cor:weakstar}, \labelcref{cor:primal:reg:weak}, and \labelcref{cor:primal:strong:uh:filtering}.
We derive a strong convergence result in \cref{prop:FE:filtering-rho} for unfiltered material distributions which allows us to deduce the result of \cref{th:FEexistence-filtering2}. We conclude this subsection by proving the result of \cref{th:FEexistence-filtering}.

\begin{proposition}
\label{prop:FEexistence:filtering}
Suppose that the conditions of \cref{th:FEexistence-filtering2} hold. Then a  global FE minimizer  $(\vu_h,\rho_h) \in \vXD \times \frFh$ of \cref{ictoFh} exists.
\end{proposition}
\begin{proof}
The proof of this result follows the proof of \cref{prop:FEexistence} with some small modifications,  up to replacing the spaces  $\fr$ and $\frh$  with  $\frF$ and $\frFh$, respectively, and the optimization problem \cref{ictoh}  with  \cref{ictoFh}.
\end{proof}

\begin{proposition}[Weak$\times$weak convergence of ($\vu_h, \rho_h)$ in $H^1(\Omega)^d \times L^2(\Omega)$]
\label{prop:FEconvergence:filtering}
Suppose that the conditions of \cref{th:FEexistence-filtering2} hold. Then there exist subsequences (up to relabelling) of the  global FE minimizers  of \cref{ictoFh}, that satisfy
\begin{align}
\vect{u}_h  &\weak \vect{u} \; \text{weakly in} \; H^1(\Omega)^d,\\
\rho_h  &\weak \rho \; \text{weakly in} \; L^2(\Omega). 
\end{align}
\end{proposition}
\begin{proof}
The proof of this result follows the proof of \cref{prop:FEconvergence} with some small modifications,  up to replacing the spaces  $\fr$, $\frh$, and $W^{1,p}(\Omega)$  with  $\frF$, $\frFh$, and $L^2(\Omega)$, respectively, and the optimization problem \cref{ictoh}  with  \cref{ictoFh}. Moreover, we do not need to invoke the Rellich--Kondrachov theorem to deduce the strongly converging subsequence in \cref{eq:rellich} as, in this framework, the conditions of \cref{lem:weakpde} are already satisfied.
\end{proof}

%\begin{corollary}
%\label{cor:weakstar:filtering} 
%Suppose that the conditions of \cref{th:FEexistence-filtering2} hold. Then there exists a subsequence (up to relabelling) of the global FE minimizers $(\rho_h)$ of \cref{ictoFh} such that $\rho_h \weakstar \rho$ weakly-* in $L^\infty(\Omega)$.
%\end{corollary}
%\begin{proof}
%The proof of this result follows the proof of \cref{cor:weakstar}.
%\end{proof}

\begin{corollary}
\label{cor:weakstar}
Suppose that the conditions of \cref{th:FEexistence-filtering2} hold. Then there exists a subsequence (up to relabelling) of the global FE minimizers $(\rho_h)$ of \cref{ictoFh} such that $\rho_h \weakstar \rho$ weakly-* in $L^\infty(\Omega)$ and consequently $\rho_h \weak \rho$ weakly in $L^s(\Omega)$ for all $s \in [1,\infty)$.
\end{corollary}
\begin{proof}
By the Banach--Alaoglu theorem,  the closed unit ball of the dual space of a normed vector space is compact in the weak-* topology and, therefore,  the closed unit ball of  $L^\infty(\Omega)$ is weak-* compact. Hence we find a subsequence (up to relabelling) such that $\rho_h \weakstar \rho_0 \in \frF \cap \{\eta: \|\rho-\eta\|_{L^\infty(\Omega)} \leq r/2\}$ weakly-* in $L^\infty(\Omega)$. By the uniqueness of the weak limit, we identify $\rho_0 = \rho$ a.e.~in $\Omega$ and, thus, we deduce that $\rho_h \weakstar \rho$ weakly-* in $L^\infty(\Omega)$,  i.e.~$\int_\Omega \rho_h \eta \, \mathrm{d}x \to \int_\Omega \rho \eta \, \mathrm{d}x$ for any $\eta \in L^1(\Omega)$. Since $L^s(\Omega) \subset L^1(\Omega)$, for any $s > 1$, we note that $\int_\Omega \rho_h \eta \, \mathrm{d}x \to \int_\Omega \rho \eta \, \dx$ for any $\eta \in L^s(\Omega)$, $s \geq 1$.  Hence, by definition,  $\rho_h \weak \rho$ weakly in $L^s(\Omega)$ for all $s \in [1,\infty)$. 
\end{proof}

\begin{corollary}
\label{cor:primal:reg:weak}
Suppose that the conditions of \cref{th:FEexistence-filtering2} hold. Let $(\rho_h)$ denote the sequence of global FE minimizers to \cref{ictoFh} such that $\rho_h \weakstar \rho$ weakly-* in $L^\infty(\Omega)$. Then $F_h(\rho_h) \to F(\rho)$ strongly in $L^\infty(\Omega)$.
\end{corollary}
\begin{proof}
The result is a direct consequence of \cref{lem:filteringh}. 
\end{proof}

\begin{corollary}[Strong convergence of $\vu_h$ in $H^1(\Omega)^d$]
\label{cor:primal:strong:uh:filtering}
Suppose that the conditions of \cref{th:FEexistence-filtering2} hold. Then there exists a subsequence (up to relabelling) of the global FE minimizers  $(\vu_h,\rho_h)$ of \cref{ictoFh} such that $\vu_h \to \vu$ strongly in $H^1(\Omega)^d$.
\end{corollary}
\begin{proof}
By \cref{cor:primal:reg:weak}, we have a subsequence such that $F_h(\rho_h) \to F(\rho)$ strongly in $L^\infty(\Omega)$. Hence, the result follows as a direct consequence of (Case 2) in \cref{lem:primal:strong:uh}.
\end{proof}

In the remainder of this subsection we tackle the open problems \labelcref{open:filtering} and \labelcref{open:filtering2}. In particular, we prove the stronger results of \cref{th:FEexistence-filtering2} and  \cref{th:FEexistence-filtering}  Recall that $(\vu, \rho)$ is the fixed local minimizer of \cref{cto}.

We first prove the following two lemmas. 
\begin{lemma}
\label{lem:eps-existence}
 Suppose that the conditions of \cref{th:FEexistence-filtering2} hold. Then  a local minimizer   exists to the following $\epsilon$-perturbed optimization problem:
\begin{align}
\begin{split}
\begin{cases}
\min\limits_{\vu_\epsilon,\rho_\epsilon} J_\epsilon(\vu_\epsilon,\rho_\epsilon) \coloneqq J(\vu_\epsilon,\rho_\epsilon) + \frac{\epsilon}{2} \|  \rho_\epsilon  \|^2_{L^2(\Omega)}, \\
\text{where} \;\; ( \vu_\epsilon,\rho_\epsilon) \in (\Hu \cap B_{r/2,H^1(\Omega)}(\vu)) \times (\frF \cap B_{r/2,L^2(\Omega)}(\rho))\\
\text{subject to} \;\; a(\vu_\epsilon, \vv; \rho_\epsilon) = l(\vv) \;\; \text{for all} \; \vv \in \Hu.
\end{cases}
\end{split} \tag{\text{I-SIMP${}^F_\epsilon$}} \label{ictoe}
\end{align}
\end{lemma}
\begin{proof}
Consider a minimizing sequence $(\vu_n, \rho_n) \in (\Hu \cap B_{r/2,H^1(\Omega)}(\vu)) \times (\frF \cap B_{r/2,L^2(\Omega)}(\rho))$. Since $\Hu \cap B_{r/2,H^1(\Omega)}(\vu)$ and $\frF \cap B_{r/2,L^2(\Omega)}(\rho)$ are norm-closed, bounded, and convex subsets of $H^1(\Omega)^d$ and $L^2(\Omega)$, respectively, then there exists a limit $(\vu_\epsilon, \rho_\epsilon) \in (\Hu \cap B_{r/2,H^1(\Omega)}(\vu)) \times (\frF \cap B_{r/2,L^2(\Omega)}(\rho))$ such that
\begin{align}
\vu_n &\weak \vu_\epsilon \;\; \text{weakly in} \;\; H^1(\Omega)^d,\\
\rho_n &\weak\rho_\epsilon \;\; \text{weakly in} \;\; L^2(\Omega).
\end{align}
By \cref{lem:weakpde}, $(\vu_\epsilon, \rho_\epsilon)$ satisfies the PDE constraint in \cref{ictoe}. Moreover, $J(\vu,\rho)$ is bounded and weak$\times$weak lower semicontinuous on $H^1(\Omega)^d \times L^2(\Omega)$. Thus $J_\epsilon(\vu_\epsilon, \rho_\epsilon) \leq \liminf_{n \to \infty} J_\epsilon(\vu_n, \rho_n)$. Hence, a local minimizer exists. 
\end{proof} 

\begin{lemma}
\label{lem:epsilon-weak}
 Suppose that the conditions of \cref{th:FEexistence-filtering2} hold.  Consider a sequence of local minimizers $(\vu_\epsilon, \rho_\epsilon)$ of \cref{ictoe}. Then, there exists a subsequence (up to relabelling) such that $\vu_\epsilon \weak \vu$ weakly in $H^1(\Omega)^d$ and $\rho_\epsilon \weak \rho$ weakly in $L^2(\Omega)$ as $\epsilon \to 0$. 
\end{lemma}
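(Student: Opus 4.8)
The plan is to run the weak-convergence argument of \cref{prop:FEMconvergence} almost verbatim, but with the discretization parameter $h$ replaced by the perturbation parameter $\epsilon$ and the finite element spaces replaced by the full admissible sets. The decisive simplification relative to the finite element case is that the target minimizer $(\vu,\rho)$ is itself feasible for \cref{ictoe} (it satisfies the PDE constraint and sits at the centre of both balls), so no recovery sequence and no appeal to \labelcref{ass:dense} are needed for the upper bound. Recall that throughout this subsection $\mR(\rho)=0$, so $J$ reduces to the compliance.

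First I would regard $\epsilon$ as running through a sequence $\epsilon_n\to 0$ and extract weak limits. Since $\Hu \cap B_{r/2,H^1(\Omega)}(\vu)$ and $\fr \cap B_{r/2,L^2(\Omega)}(\rho)$ are norm-closed, bounded, and convex subsets of the reflexive spaces $H^1(\Omega)^d$ and $L^2(\Omega)$, they are weakly sequentially compact by the same Kakutani and Eberlein--\v{S}mulian argument used in \cref{prop:FEMconvergence}. Hence, up to relabelling, $\vu_\epsilon \weak \vu_0$ weakly in $H^1(\Omega)^d$ and $\rho_\epsilon \weak \rho_0$ weakly in $L^2(\Omega)$, with the limit $(\vu_0,\rho_0)$ again lying in these weakly closed balls. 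Because filtering is in force, the weak convergence $\rho_\epsilon \weak \rho_0$ in $L^2(\Omega)$ already meets the hypotheses of \cref{lem:weakpde} (applied in the continuous setting, exactly as in the proof of \cref{lem:eps-existence}), and so $(\vu_0,\rho_0)$ satisfies the PDE constraint in \cref{cto}.

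The heart of the argument is a sandwich on the objective that identifies $(\vu_0,\rho_0)$ with the isolated minimizer. For the upper bound, feasibility of $(\vu,\rho)$ for \cref{ictoe} and minimality of $(\vu_\epsilon,\rho_\epsilon)$ give $J(\vu_\epsilon,\rho_\epsilon)\le J_\epsilon(\vu_\epsilon,\rho_\epsilon)\le J_\epsilon(\vu,\rho)=J(\vu,\rho)+\tfrac{\epsilon}{2}\|\rho\|_{L^2(\Omega)}^2$, whence $\limsup_{\epsilon\to 0}J(\vu_\epsilon,\rho_\epsilon)\le J(\vu,\rho)$. For the lower bound, with $\mR=0$ the functional $J$ is a bounded linear (hence weakly lower semicontinuous) functional of $\vu$, so $J(\vu_0,\rho_0)\le\liminf_{\epsilon\to 0}J(\vu_\epsilon,\rho_\epsilon)$. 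Combining the two bounds yields $J(\vu_0,\rho_0)\le J(\vu,\rho)$. Since $(\vu,\rho)$ is the unique minimizer of \cref{cto} over $(\Hu\cap B_{r/2,H^1(\Omega)}(\vu))\times(\fr\cap B_{r/2,L^2(\Omega)}(\rho))$ and $(\vu_0,\rho_0)$ is a feasible point of this restricted problem, minimality forces $J(\vu_0,\rho_0)=J(\vu,\rho)$, so $(\vu_0,\rho_0)$ is itself a minimizer and the isolation assumption gives $(\vu_0,\rho_0)=(\vu,\rho)$. Uniqueness of the weak limit then pins the limit down independently of the chosen subsequence, delivering $\vu_\epsilon\weak\vu$ and $\rho_\epsilon\weak\rho$.

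I expect the only genuinely analytic obstacle to be the passage of the nonlinear PDE constraint to the weak limit: a naive weak limit of $k(F(\rho_\epsilon))\tE\vu_\epsilon$ would not obviously equal $k(F(\rho_0))\tE\vu_0$, and it is precisely the compactness built into the filter assumptions \labelcref{ass:filtering2}--\labelcref{ass:filtering} (which upgrade $\rho_\epsilon\weak\rho_0$ to $F(\rho_\epsilon)\to F(\rho_0)$ strongly) that resolves this, as encapsulated in \cref{lem:weakpde}. Everything else is soft functional analysis, and the upper bound, which in the finite element setting required constructing a strongly converging recovery pair, is here immediate from the feasibility of $(\vu,\rho)$ itself.
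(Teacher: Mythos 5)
Your proposal is correct and follows essentially the same route as the paper: weak sequential compactness of the norm-closed, bounded, convex balls, passage of the PDE constraint to the weak limit via \cref{lem:weakpde} (using the filtering hypothesis), an objective-value comparison combining minimality with weak lower semicontinuity of $J$, and identification of $(\vu_0,\rho_0)$ with $(\vu,\rho)$ by uniqueness of the minimizer in the restricted balls. The only (harmless) difference is that the paper compares $J_\epsilon(\vu_\epsilon,\rho_\epsilon)$ against an arbitrary strongly converging feasible sequence $(\vu_n,\rho_n)$ and takes a double limit, whereas you compare directly against the feasible point $(\vu,\rho)$ itself; this is a mild streamlining of the same argument, not a different one.
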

\begin{proof}
Since $\Hu \cap B_{r/2,H^1(\Omega)}(\vu)$ and $\frF \cap B_{r/2,L^2(\Omega)}(\rho)$ are norm-closed, bounded, and convex subsets of $H^1(\Omega)^d$ and $L^2(\Omega)$, respectively, then there exists a limit $(\vu_0, \rho_0) \in (\Hu \cap B_{r/2,H^1(\Omega)}(\vu)) \times (\frF \cap B_{r/2,L^2(\Omega)}(\rho))$ such that
\begin{align}
\vu_\epsilon &\weak \vu_0 \;\; \text{weakly in} \;\; H^1(\Omega)^d,\\
\rho_\epsilon &\weak\rho_0 \;\; \text{weakly in} \;\; L^2(\Omega).
\end{align}

By \cref{lem:weakpde}, $(\vu_0, \rho_0)$ satisfies the PDE constraint in \cref{cto}. 
 
Consider any sequence of functions $\rho_n \in \frF$ such that $\rho_n \to \rho$ strongly in $L^2(\Omega)$.  Then the corresponding sequence of elastic displacements $\vu_n \in \Hu$ such that the pair $(\vu_n, \rho_n)$ satisfies the PDE constraint in \cref{cto} also satisfies $\vu_n \to \vu$ strongly in $H^1(\Omega)^d$.  For any $\epsilon > 0$ and sufficiently large $n \in \mathbb{N}$,
\begin{align}
J_\epsilon(\vu_\epsilon, \rho_\epsilon) \leq J_\epsilon(\vu_n, \rho_n).
\end{align}
$J_\epsilon$ is weak$\times$weak lower semicontinuous on $H^1(\Omega)^d \times L^2(\Omega)$. Thus by taking the limits $\epsilon \to 0$ and $n \to \infty$ and using the weak convergence of $(\vu_\epsilon, \rho_\epsilon)$ and the strong convergence of $(\vu_n, \rho_n)$ we see that
\begin{align} 
J(\vu_0, \rho_0)  \leq \liminf_{\epsilon \to 0} J_\epsilon(\vu_\epsilon, \rho_\epsilon) \leq  J(\vu,\rho).
\end{align} 
Since $(\vu, \rho)$ is the unique minimizer of \cref{cto} in $(\Hu \cap B_{r/2,H^1(\Omega)}(\vu)) \times (\frF \cap B_{r/2,L^2(\Omega)}(\rho))$, we identify $\vu_0 = \vu$ and $\rho_0 = \rho$ a.e.~in $\Omega$. 
\end{proof}

The following proposition is concerned with the open problem \labelcref{open:filtering}. We show that if a  density filter is used, then there exists a sequence of the \emph{unfiltered} discretized material distributions that converges \emph{strongly} in $L^s(\Omega)$ for any $s \in [1,\infty)$.
 
\begin{proposition}[Strong convergence of $\rho_h$ in $L^s(\Omega)$, $s\in[1,\infty)$]
\label{prop:FE:filtering-rho}
 Suppose that the conditions of \cref{th:FEexistence-filtering2} hold.  Then there exists a subsequence (up to relabelling) of  global FE minimizers  of \cref{ictoFh}, that satisfy, for any $s \in [1,\infty)$,
\begin{align}
\rho_h \to \rho \; \text{strongly in} \; L^s(\Omega). 
\end{align}
\end{proposition}

In \cref{fig:relations}, we provide the structure of the proof. The first step is to prove the two downward limits, as $\epsilon \to 0$, of minimizers of the $\epsilon$-perturbed problem \cref{ictoe} to minimizers of the  problems \cref{cto} and \cref{ictoFh}. This is achieved via weak convergence, the convergence of the supremum value of the norms, and the Radon--Riesz property \cite[Th.~A.70]{Fonseca2006}. Next, the top arrow, as $h \to 0$, in the $\epsilon$-perturbed problem is proven using the properties of the minimization problem and the Radon--Riesz property. Then, since $\rho_{\epsilon, h}$, $\rho_\epsilon$, $\rho_h$, and $\rho$ are all bounded in $L^s(\Omega)$ uniformly in $h$ and $\epsilon$, we invoke the Osgood theorem  \cite[Ch.~4, Sec.~11, Th.~2']{Zakon2017} for exchanging limits together with the Radon--Riesz property to conclude the result. 

\begin{figure}[h!]
\centering
\begin{tikzcd}[column sep=5em, row sep=3em]
\rho_{\epsilon, h} \arrow[swap]{d}{\substack{\text{Radon--Riesz} + \cref{eq:filter:6}\\ \epsilon \to 0}}   \arrow{r}{\substack{\cref{eq:filtering:rho2} \\ h \to 0}} & \rho_\epsilon  \arrow{d}{\substack{\text{Radon--Riesz} + \cref{eq:filter:6}\\ \epsilon \to 0}}  & \cref{ictoe} \\%
\rho_h  \arrow[swap]{r}{\substack{\vspace{3mm}\\ \text{Osgood} + \text{Radon--Riesz}\\ h \to 0}}   & \rho  & \cref{ictoFh}
\end{tikzcd}
\caption{A summary of the proof of \cref{prop:FE:filtering-rho}.} \label{fig:relations}
\end{figure}

\begin{proof}[Proof of  \cref{prop:FE:filtering-rho}]
Consider the global minimizer $(\vu_\epsilon, \rho_\epsilon)$ of \cref{ictoe}. If there is more than one global minimizer, then fix one. After discretizing \cref{ictoe}, then  for each $\epsilon > 0$, by  using the same arguments as in the proof of \cref{prop:FEconvergence:filtering}, there exists a sequence such that $\vu_{\epsilon, h_\epsilon} \to \vu_\epsilon$ strongly in $H^1(\Omega)^d$ and $\rho_{\epsilon, h_\epsilon} \weakstar \rho_\epsilon$ weakly-* in $L^\infty(\Omega)$ as $h_\epsilon \to 0$  where $(\vu_{\epsilon, h_\epsilon}, \rho_{\epsilon, h_\epsilon})$ is the (possibly non-unique) global minimizer of the FE discretized optimization problem \cref{ictoe} with mesh size $h_\epsilon$. 

By \cref{lem:epsilon-weak}, there exists a sequence (up to relabelling) $(\vu_\epsilon, \rho_\epsilon)$ such that $\rho_\epsilon \weak \rho$ weakly in $L^2(\Omega)$ and $\vu_\epsilon \weak \vu$ weakly in $H^1(\Omega)^d$ as $\epsilon \to 0$. We now strengthen the convergence of material distribution to strong convergence in $L^2(\Omega)$ as $\epsilon \to 0$.  Suppose that for any $\epsilon > 0$ under a certain threshold, 
\begin{align}
\| \rho \|_{L^2(\Omega)} < \| \rho_\epsilon \|_{L^2(\Omega)}.
\end{align}
This would imply that
\begin{align}
J(\vu, \rho) + \frac{\epsilon}{2}\| \rho \|^2_{L^2(\Omega)}  < J(\vu_\epsilon, \rho_\epsilon) + \frac{\epsilon}{2} \| \rho_\epsilon \|^2_{L^2(\Omega)}. 
\end{align}
However, this would be a contradiction  with the  definition of $(\vu_\epsilon, \rho_\epsilon)$. Thus,  for the sequence such that $\rho_\epsilon \weak \rho$ in $L^2(\Omega)$, 
\begin{align}
\limsup_{\epsilon \to 0} \| \rho_\epsilon \|_{L^2(\Omega)} \leq \| \rho \|_{L^2(\Omega)}.
\label{eq:filter:6}
\end{align}
Since $L^2(\Omega)$ is a uniformly convex Banach space, then by a generalized result of the Radon--Riesz property \cite[Th.~A.70]{Fonseca2006}, we have that $\rho_\epsilon \to \rho$ strongly in $L^2(\Omega)$. A similar argument gives,  for each $h > 0$,  $\rho_{ \epsilon_h , h} \to \rho_h$ strongly in $L^2(\Omega)$,  as  $\epsilon_h \to 0$. 

The next step is to show that, for each $\epsilon > 0$, there exists a subsequence such that  $\|\rho_{\epsilon, h_\epsilon}\|_{L^2(\Omega)} \to   \|\rho_\epsilon\|_{L^2(\Omega)}$ as $h_\epsilon \to 0$.  The deduction of the result is similar to the proof of \cref{prop:FE:regularization}. By \labelcref{ass:dense}, there exists a sequence of functions $(\hat{\rho}_{\epsilon, h_\epsilon})$ such that $\hat{\rho}_{\epsilon, h_\epsilon} \to \rho_\epsilon$ strongly in $L^2(\Omega)$.  Moreover, by (Case 2) in \cref{lem:primal:strong:uh} the corresponding sequence of elastic displacements $(\hat{\vu}_{\epsilon, h_\epsilon})$ such that the pair $(\hat{\vu}_{\epsilon, h_\epsilon},\hat{\rho}_{\epsilon, h_\epsilon})$ satisfies the PDE constraint in \cref{ictoe} also satisfies $\hat{\vu}_{\epsilon, h_\epsilon} \to \vu_\epsilon$ strongly in $H^1(\Omega)^d$. Thanks to the strong convergence, for sufficiently small ${h_\epsilon}$, we have that $(\hat{\vu}_{\epsilon, h_\epsilon},\hat{\rho}_{\epsilon, h_\epsilon}) \in (\vXD \cap B_{r/2,H^1(\Omega)}(\vu)) \times (\frFh \cap B_{r/2,L^2(\Omega)}(\rho))$. Hence
\begin{align}
J_\epsilon(\vu_\epsilon, \rho_\epsilon) \leq J_{\epsilon, {h_\epsilon}}(\vu_{\epsilon, h_\epsilon}, \rho_{\epsilon, h_\epsilon}) \leq J_{\epsilon, {h_\epsilon}}(\hat{\vu}_{\epsilon, h_\epsilon},\hat{\rho}_{\epsilon, h_\epsilon}),
\end{align}
where $J_{\epsilon, h}(\vv, \eta) = l_h(\vv) + \frac{\epsilon}{2} \| \eta\|^2_{L^2(\Omega)}$. In turn this implies that
\begin{align}
\begin{split}
&\frac{\epsilon}{2} \left| \|\rho_\epsilon\|^2_{L^2(\Omega)} - \| \rho_{\epsilon, h_\epsilon}\|^2_{L^2(\Omega)} \right|\\
& \indent \leq |J_{\epsilon, h}(\hat{\vu}_{\epsilon, h_\epsilon},\hat{\rho}_{\epsilon, h_\epsilon}) - J_\epsilon(\vu_\epsilon, \rho_\epsilon)| 
+ |l(\vu_\epsilon) - l_h(\vu_{\epsilon, h_\epsilon})|.
\end{split} \label{eq:filtering:strong1}
\end{align}
Label the two terms on the right-hand side of \cref{eq:filtering:strong1} by (I) and (II), respectively. Thanks to the strong convergence of $(\hat{\vu}_{\epsilon, h_\epsilon},\hat{\rho}_{\epsilon, h_\epsilon})$ in $H^1(\Omega)^d \times L^2(\Omega)$ to $(\vu_\epsilon, \rho_\epsilon)$ then $\mathrm{(I)} \to 0$. Similarly due to \labelcref{ass:fh} and \cref{cor:primal:strong:uh:filtering}, we have that $\mathrm{(II)} \to 0$. Therefore, $\|\rho_{\epsilon, h_\epsilon}\|^2_{L^2(\Omega)} \to \|\rho_\epsilon\|^2_{L^2(\Omega)}$ and consequently $\|\rho_{\epsilon, h_\epsilon}\|_{L^2(\Omega)} \to \|\rho_\epsilon\|_{L^2(\Omega)}$.

 We have shown that $\lim_{\epsilon_{h_\epsilon} \to 0}\|\rho_{\epsilon_{h_\epsilon}, h_\epsilon} \|_{L^2(\Omega)} = \|\rho_{h} \|_{L^2(\Omega)}$ for each $h$ and  $\lim_{h_\epsilon \to 0}  \|\rho_{\epsilon, h_\epsilon} \|_{L^2(\Omega)} =  \|\rho_{\epsilon} \|_{L^2(\Omega)}$ for each $\epsilon$. Thus after a diagonalization argument, we may extract a subsequence $(h_j, \epsilon_j)$, $j \in \mathbb{N}$ such that $\lim_{\epsilon_j, h_j \to 0} \| \rho_{\epsilon_j, h_j} \|_{L^2(\Omega)} = L$ for some $L \geq 0$. Dropping the subsequence subscript $j$, then an application of the Osgood Theorem \cite[Ch.~4, Sec.~11, Th.~2']{Zakon2017} for exchanging limits reveals that the limits  $ \lim_{h \to 0}  \lim_{\epsilon \to 0}\| \rho_{\epsilon,h} \|_{L^2(\Omega)}$ and $\lim_{\epsilon \to 0} \lim_{h \to 0} \| \rho_{\epsilon,h} \|_{L^2(\Omega)}$ exist and 
\begin{align}
L  =  \lim_{\epsilon \to 0} \lim_{h \to 0} \| \rho_{\epsilon,h} \|_{L^2(\Omega)} =  \lim_{h \to 0}  \lim_{\epsilon \to 0}\| \rho_{\epsilon,h} \|_{L^2(\Omega)}. \label{eq:filtering:rho1}
\end{align}
By first taking the inner limit and then the outer limit in the second term and only the inner limit in the third term of \cref{eq:filtering:rho1}, we find that
\begin{align}
\| \rho \|_{L^2(\Omega)} = \lim_{h \to 0} \| \rho_{h} \|_{L^2(\Omega)}. \label{eq:filtering:rho2}
\end{align}
Since $L^2(\Omega)$ is a uniformly convex Banach space, $\rho_{h} \weak \rho$ weakly in $L^2(\Omega)$ and \cref{eq:filtering:rho2} holds, then by the Radon--Riesz property \cite[Th.~A.70]{Fonseca2006}, we have that $\rho_{h} \to \rho$ strongly in $L^2(\Omega)$. \cref{lem:density:strong} implies that $\rho_{h} \to \rho$ strongly in $L^s(\Omega)$ for any $s \in [1,\infty)$. 
\end{proof}

%\begin{remark}
%\label{rem:filtering-rho-strong}
%The result in \cref{prop:FE:filtering-rho} still holds for a nonlinear filter provided there exists a linear operator $G(\rho; \cdot)$ such that
%\begin{align}
%\langle F'(\rho), \rho- \eta \rangle = G(\rho; \rho-\eta) = G(\rho;\rho) - G(\rho; \eta),
%\end{align}
%and $G(\rho; \cdot)$ satisfies \labelcref{ass:filtering2}--\labelcref{ass:filtering-projection}. 
%\end{remark}

\begin{corollary}
\label{cor:icto2cto:3}
 Suppose that the conditions in \cref{th:FEexistence-filtering2} hold.  Then, for sufficiently small $h$, there exists a subsequence of  global  FE  minimizers $(\vu_h,\rho_h)$ of \cref{ictoFh} that are also  local  FE  minimizers of \cref{ctoh}.
\end{corollary}
\begin{proof}
 \cref{cor:primal:strong:uh:filtering} and \cref{prop:FE:filtering-rho}  imply that there exists a subsequence $(\vu_h,\rho_h)$ such that  $(\vu_h, \rho_h) \to (\vu,\rho)$ strongly in $H^1(\Omega)^d \times L^2(\Omega)$.  By the definition of strong convergence, there exists an $\bar h$ such that for all $h \leq \bar h$, $\| \vu - \vu_h\|_{H^1(\Omega)} + \| \rho - \rho_h\|_{L^2(\Omega)} \leq r/4$. Thus the basin of attraction constraint is not active and the sequence of  global FE minimizers  of \cref{ictoFh} are also  local FE  minimizers of \cref{ctoh}.
\end{proof}

We now have the sufficient ingredients to prove \cref{th:FEexistence-filtering2}.

\begin{proof}[Proof of \cref{th:FEexistence-filtering2}]
By \cref{prop:FEexistence:filtering} and \cref{cor:weakstar}, there exists a sequence of  global FE minimizers  $(\vu_h, \rho_h)$ minimizing \cref{ictoFh} that satisfy $\vu_h \weak \vu$ weakly in $H^1(\Omega)^d$ and $\rho_h \weakstar \rho$ weakly-* in $L^\infty(\Omega)$. In \cref{cor:primal:strong:uh:filtering}, we show that there exists a subsequence (up to relabelling) such that $\vu_h \to \vu$ strongly in $H^1(\Omega)$. Then, in \cref{prop:FE:filtering-rho}, we show that there exists a subsequence (up to relabelling) such that $\rho_h \to \rho$ strongly in $L^s(\Omega)$ for any $s \in [1,\infty)$. Finally, in \cref{cor:icto2cto:3}, we show that a subsequence of the global FE minimizers are also local FE minimizers of \cref{ctoh}. 
\end{proof}

The final result we consider is the proof of \cref{th:FEexistence-filtering} concerning the strong covergence of the filtered material distributions in $W^{1,q}(\Omega)$, $q \in [1,\infty)$. The proof of this result follows a similar pattern to the proof of \cref{prop:FE:filtering-rho}. Essentially, the goal is to prove strong convergence of an $\epsilon$-perturbed problem, then by proving convergence in the value of the norms, weak convergence, and utilizing the  Radon--Riesz property, we deduce strong convergence.

%A summary is given in \cref{fig:relations2}.
%\begin{figure}[h]
%\centering
%\begin{tikzcd}[column sep=5em, row sep=3em]
%\nabla \frho_{\epsilon, h} \arrow[swap]{d}{\substack{\text{Radon--Riesz} + \cref{eq:filtering3}\\ \epsilon \to 0}}   \arrow{r}{\substack{\cref{eq:filtering4} + \text{\cref{prop:FE:regularization}}\\ h \to 0}} & \nabla \frho_\epsilon  \arrow{d}{\substack{\text{Radon--Riesz} + \cref{eq:filtering3}\\ \epsilon \to 0}}  & \cref{ifctoe}\ \\%
%\nabla F_h(\rho_h)  \arrow[swap]{r}{\substack{\vspace{3mm}\\ \text{Osgood} + \text{Radon--Riesz}\\ h \to 0}}   & \nabla F(\rho)  & \cref{ictoFh}
%\end{tikzcd}
%\caption{A summary of the proof of \cref{th:FEexistence-filtering}.} \label{fig:relations2}
%\end{figure}

\begin{proof}[Proof of \cref{th:FEexistence-filtering}]

 Recall that $(\vu, F(\rho))$ is an isolated local minimizer of \cref{fcto}.  Let $\frho \coloneqq F(\rho)$ and $\frho_h \coloneqq F_h(\rho_h)$. \cref{th:FEexistence-filtering2} implies the existence of a sequence of local FE minimizers of \cref{ctoh} such that $\vu_h \to \vu$ strongly in $H^1(\Omega)^d$ and $\rho_h \to \rho$ strongly in $L^s(\Omega)$ for any $s \in [1,\infty)$. Subsequently by \cref{lem:filteringh}, $\frho_h \to \frho$ strongly in $L^s(\Omega)$ for any $s \in [1,\infty)$. %We note that the sequence of $\frho_h$ is a bounded sequence in $W^{1,q}(\Omega)$ thanks to \labelcref{ass:filtering3} and \labelcref{ass:filtering2}. Hence, as $W^{1,q}(\Omega)$ is a reflexive Banach space, we may extract a subsequence (up to relabelling) such that $\frho_h \weak \frho_0$ weakly in $W^{1,q}(\Omega)$ for some $\frho_0 \in W^{1,q}(\Omega)$. Hence, by \cref{lem:filteringh}, $\frho_h \to F(\rho)$ strongly in $L^q(\Omega)$. Thus, by the uniqueness of limits, $\frho_0 = F(\rho) = \frho$.

Let $\tilde{r}$ denote the radius of the basin of attraction. Denote the  global  minimizers of the following optimization problem by $(\vu_\epsilon, \frho_\epsilon)$: 
\begin{align}
\begin{split}
\begin{cases}
\min\limits_{\vv,\feta} \tilde{J}(\vv,\feta) + \frac{\epsilon}{q} \| \nabla \feta \|^q_{L^q(\Omega)}, \\
\text{where} \;\; (\vv,\feta) \in (\Hu \cap B_{\tilde{r}/2,H^1(\Omega)}(\vu)) \times (\ffrF \cap B_{\tilde{r}/2,W^{1,q}(\Omega)}(\frho))\\
\text{subject to} \;\; \tilde{a}(\vu_\epsilon, \vv; \frho_\epsilon) = l(\vv) \;\; \text{for all} \; \vv \in \Hu.
\end{cases}
\end{split} \tag{$\text{$\tilde{\text{I}}$-SIMP${}^F_\epsilon$}$} \label{ifctoe}
\end{align} 
The existence of a local minimizer follows by a calculus of variations argument similar to the proof of \cref{lem:eps-existence}. Consider the sequence of filtered material distribution  global  minimizers $\frho_\epsilon$ as $\epsilon \to 0$. Since $ \ffrF  \cap B_{\tilde{r}/2,W^{1,q}(\Omega)}(\frho)$ is a norm-closed, bounded and convex subset of the reflexive Banach space $W^{1,q}(\Omega)$, then there exists a subsequence (not relabeled) such that, as $\epsilon \to 0$,
\begin{align}
\frho_\epsilon \weak \frho_0 \in  \ffrF  \cap B_{\tilde{r}/2,W^{1,{ q }}(\Omega)}(\frho) \;\; \text{weakly in $W^{1,{ q }}(\Omega)$}.
\end{align}
Since  $(\vu, \frho)$ is the unique minimizer of $J$ in $(\Hu \cap B_{\tilde{r}/2,H^1(\Omega)}(\vu)) \times (\ffrF \cap B_{\tilde{r}/2,W^{1,{ q }}(\Omega)}(\frho))$,  then using the same arguments as in the proof of \cref{lem:epsilon-weak}, we may indentify $\frho_0 = \frho$ a.e.

We now wish to strengthen the convergence to strong convergence in $W^{1,{ q }}(\Omega)$.  Suppose that for any $\epsilon > 0$ under a certain threshold, 
\begin{align}
\| \nabla \frho \|_{L^{ q }(\Omega)} < \| \nabla \frho_\epsilon \|_{L^{ q }(\Omega)}.
\end{align}
This would imply that
\begin{align}
\tilde{J}(\vu, \frho) + \frac{\epsilon}{{ q }}\| \nabla\frho \|^p_{L^{ q }(\Omega)}  < \tilde{J}(\vu_\epsilon, \frho_\epsilon) + \frac{\epsilon}{{ q }} \| \nabla \frho_\epsilon \|^p_{L^{ q }(\Omega)},
\end{align}
 as $\tilde{J}(\vu_\epsilon, \frho_\epsilon) \geq \tilde{J}(\vu, \frho)$.  However, this would be a contradiction  with the  definition of $(\vu_\epsilon, \frho_\epsilon)$. Thus  for the sequence such that $\tilde{\rho}_\epsilon \weak \tilde{\rho}$ in $W^{1,{ q }}(\Omega)$ and thus $\tilde{J}_\epsilon(\vu_\epsilon, \frho_\epsilon) \to \tilde{J}(\vu, \frho)$, 
\begin{align}
\limsup_{\epsilon \to 0} \| \nabla \frho_\epsilon \|_{L^{ q }(\Omega)} \leq \| \nabla \frho \|_{L^{ q }(\Omega)}.
\label{eq:filtering3}
\end{align}
Since $L^{ q }(\Omega)$ is a uniformly convex Banach space for any ${ q } \in (1,\infty)$, then by a generalized result of the Radon--Riesz property \cite[Th.~A.70]{Fonseca2006}, we have that $\nabla \frho_\epsilon \to \nabla \frho$ strongly in $L^{ q }(\Omega)$. By an application of the Rellich--Kondrachov theorem, \labelcref{ass:filtering-box}, and \cref{lem:density:strong} we deduce that $\frho_\epsilon \to \frho$ strongly in $W^{1,{ q }}(\Omega)$.   For each $h > 0$, a similar argument implies the existence of a sequence $\epsilon_h \to 0$ that satisfies $\frho_{\epsilon_h, h} \to \frho_h$  strongly in $W^{1,{ q }}(\Omega)$. 

%By instead considering variations in $\frho$ rather than $\rho$ in \cref{prop:primal:foc}, we find that, for any $\feta \in  \ffrF $,
%\begin{align}
%-\int_\Omega k'(\frho_\epsilon )|\tE \vu_\epsilon |^2(\feta - \frho_\epsilon ) \dx  + \epsilon \int_\Omega |\nabla \frho_\epsilon|^{{ q }-2} \nabla \frho_\epsilon \cdot \nabla(\feta - \frho_\epsilon) \, \dx \geq 0. 
%\label{eq:filtering4}
%\end{align}
 With only small changes to the proof of \cref{prop:FE:regularization} we see that, for each $\epsilon > 0$, there exists a subsequence $h_\epsilon \to 0$ such that $\|\frho_{\epsilon, h_\epsilon}\|_{W^{1,q}(\Omega)} \to   \|\frho_\epsilon\|_{W^{1,q}(\Omega)}$. 

 We have shown that $\lim_{\epsilon_{h_\epsilon} \to 0}\|\frho_{\epsilon_{h_\epsilon}, h_\epsilon} \|_{W^{1,{ q }}(\Omega)} = \|\frho_{h} \|_{W^{1,{ q }}(\Omega)}$ for each $h$ and  $\lim_{h_\epsilon \to 0}  \|\rho_{\epsilon, h_\epsilon} \|_{W^{1,{ q }}(\Omega)} =  \|\rho_{\epsilon} \|_{W^{1,{ q }}(\Omega)}$ for each $\epsilon$ in their respective sequences. Thus after a diagonalization argument, we may extract a subsequence $(h_j, \epsilon_j)$, $j \in \mathbb{N}$ such that $\lim_{\epsilon_j, h_j \to 0} \| \rho_{\epsilon_j, h_j} \|_{W^{1,{ q }}(\Omega)} = L$ for some $L \geq 0$. Dropping the subsequence subscript $j$, then an application of the Osgood Theorem \cite[Ch.~4, Sec.~11, Th.~2']{Zakon2017} for exchanging limits reveals that the limits $ \lim_{h \to 0}  \lim_{\epsilon \to 0}\| \rho_{\epsilon,h} \|_{W^{1,{ q }}(\Omega)}$ and $\lim_{\epsilon \to 0} \lim_{h \to 0} \| \rho_{\epsilon,h} \|_{W^{1,{ q }}(\Omega)}$ exist and 
\begin{align}
L  =  \lim_{\epsilon \to 0} \lim_{h \to 0} \| \frho_{\epsilon,h} \|_{W^{1,{ q }}(\Omega)} =  \lim_{h \to 0}  \lim_{\epsilon \to 0}\| \frho_{\epsilon,h} \|_{W^{1,{ q }}(\Omega)}. \label{eq:filtering1}
\end{align}
Thus by first taking the inner limit and then the outer limit in the second term and only the inner limit in the third term of \cref{eq:filtering1}, we find that
\begin{align}
\| \frho \|_{W^{1,{ q }}(\Omega)} = \lim_{h \to 0} \| \frho_h \|_{W^{1,{ q }}(\Omega)}. \label{eq:filtering2}
\end{align}
Since $W^{1,{ q }}(\Omega)$ is a uniformly convex Banach space for any ${ q } \in (1,\infty)$, $\frho_h \weak \frho$ weakly in $W^{1,{ q }}(\Omega)$ and \cref{eq:filtering2} holds, then by the Radon--Riesz property \cite[Th.~A.70]{Fonseca2006}, we have that $\frho_h \to \frho$ strongly in $W^{1,{ q }}(\Omega)$. Thus $F_h(\rho_h) \to F(\rho)$ strongly in $W^{1,{ q }}(\Omega)$. 
\end{proof} 

%\begin{corollary}
%\label{cor:icto2cto:2}
%Suppose a filter in the sense of \cref{prop:FE:filtering} is used. Then, for sufficiently small $h$, there exists a subsequence of minimizers $(\vu_h,\frho_h)$ of \cref{fctoh} that are also minimizers of \cref{fctoh}.
%\end{corollary}
%\begin{proof}
%Corollaries \labelcref{cor:primal:strong:uh} and \labelcref{prop:FE:filtering-rho} imply that there exists a subsequence $(\vu_h,\rho_h)$ such that $\vu_h \to \vu$ strongly in $H^1(\Omega)^d$ and $\rho_h \to \rho$ strongly in $L^2(\Omega)$. By the definition of strong convergence, there exists and $\bar h$ such that for all $h \leq \bar h$, $\| \vu - \vu_h\|_{H^1(\Omega)} + \| \rho - \rho_h\|_{L^2(\Omega)} \leq r/4$. Thus the basin of attraction constraint is not active and minimizers of \cref{ictoh} are minimizers of \cref{ctoh}.
%\end{proof}

%We now have the sufficient ingredients to prove \cref{th:FEexistence-filtering}. 
%
%\begin{proof}[Proof of \cref{th:FEexistence-filtering}]
%By \cref{prop:FEexistence}, there exists a sequence of FE solutions $(\vu_h, \rho_h)$ minimizing \cref{ictoh} that satisfy $\vu_h \weak \vu$ weakly in $H^1(\Omega)^d$ and $\rho_h \weakstar \rho$ weakly-* in $L^\infty(\Omega)$. In \cref{cor:primal:strong:uh}, we show that there exists a subsequence such that $\vu_h \to \vu$ strongly in $H^1(\Omega)$. Then, in \cref{prop:FE:filtering}, we show that $F_h(\rho_h) \to F(\rho)$ strongly in $L^\infty(\Omega)$. Furthermore, if $\frh \subset W^{1,p}(\Omega)$ for some $p \in (1,\infty)$, then $F_h(\rho_h) \to F(\rho)$ strongly in $W^{1,p}(\Omega)$.
%\end{proof}

\section{Conclusions and future directions}
\label{sec:conclusions}

In this work we studied the convergence of a conforming FE discretization of the SIMP model for the linear elasticity compliance topology optimization problem. To ensure existence, we considered two types of  regularization  methods: $W^{1,p}$-type  penalty methods  and density filtering. The nonconvexity of the optimization problem was handled by fixing any isolated local or global minimizer and introducing a modified optimization problem with the chosen isolated minimizer as its unique  global minimizer.  We then showed that there exists a sequence of discretized local FE minimizers that converges to the infinite-dimensional minimizer in the appropriate norms.  The  elastic  displacement and material distribution converge strongly in $H^1(\Omega)^d$ and $L^p(\Omega)$, for any $p \in [1,\infty)$, respectively, for either a $W^{1,p}$-type  penalty  or a density filter. If a $W^{1,p}$-type  penalty  is used, we further deduced that there exists a sequence such that $\rho_h \to \rho$ strongly in $W^{1,p}(\Omega)$. In contrast, if a density filter is used then the filtered material distribution converges strongly in $W^{1,q}(\Omega)$, $q \in [1,\infty)$, provided the discretization for the filtered material distribution is $W^{1,q}(\Omega)$-conforming. Thanks to the strong convergence, a subsequence of the local FE minimizers of the modified optimization problem are also minimizers of the original optimization problem.

We now outline some future directions. 
\subsubsection*{Rates of convergence}

To the best of the author's knowledge, there are currently no results that prove the rate of convergence of a sequence of local FE minimizers to an infinite-dimensional local minimizer. Numerical studies compare the convergence of local FE minimizers to a heavily refined local FE minimizer, which serves as a reference solution. Ultimately relying solely on this approach may yield misleading outcomes. It is of practical importance to know the reduction in the error when reducing the mesh size. We hope that the techniques used to prove the strong convergence results in this work will contribute significantly in this endeavour. 

\subsubsection*{Adaptive mesh refinement}

Topology optimization is a field that significantly benefits from adaptive mesh refinement. Intuitively, refining in regions where $0 < \rho \leq 1$ a.e.~reduces the error more dramatically than in areas where $\rho = 0$ a.e. These strategies enable one to achieve smaller errors with fewer degrees of freedom, thereby cutting down the overall computational cost. Notable progress has been made in this direction cf.~\cite{Wang2007, Nana2016, Stainko2006, Costa2003, Salazar2018}. However, none of these cited work prove that their proposed mesh adaptivity strategies are guaranteed to converge to an infinite-dimensional local minimizer in the limit. There is a possibility that cells in the mesh, where the error is nonzero, may not meet the refinement criteria and consequently may never be refined, hindering the convergence of the local FE minimizer in the limit. We hope that by utilizing the framework introduced in this work, one may leverage the results in \cite{Morin2008, Kreuzer2018, Kawecki2021} to devise mesh adaptive algorithms that provably converge in the limit.

\subsubsection*{Locking}
\label{sec:locking}
A common criticism of the primal formulation of classical linear elasticity (without the topology optimization component) is that as the material becomes incompressible $(\lambda \to \infty)$, the operator norm becomes unbounded. This can result in \emph{locking}, the phenomenon where the approximation has sub-optimal convergence, or even appear to diverge, for large ranges of the mesh size $h$ (before the asymptotic regime kicks in). In the primal formulation for linear elasticity, it is often possible to prove an error estimate of the form, for $\vu \in H^s(\Omega)^d$ \cite[Eq.~(2.9a)]{Ainsworth2022}:
\begin{align}
\| \vu - \vu_h \|_{H^1(\Omega)} \leq C(\lambda) h^{\min(k, s-1)} \| \vu \|_{H^s(\Omega)},
\end{align} 
where $k$ denotes the polynomial degree of the FE discretization of $\vu$. Moreover $C(\lambda) \to \infty$ as $\lambda \to \infty$, which corresponds to the incompressible limit. Asymptotically, the error should reduce at rate of $h^{\min(k, s-1)}$. However, if $C(\lambda)$ is extremely large, the constant will dominate the error resulting in locking.  Locking can be alleviated by using a high-order element for $\vu$, e.g.~quartics or higher \cite{Ainsworth2022} or by using alternatives to the primal formulation such as the pressure and symmetric stress formulations \cite[Ch.~VI.3]{Braess2007}.

The appearance of locking is particularly troublesome when considering the SIMP model. The primal formulation with low-order elements is used in almost all implementations of the SIMP model, e.g.~\cite{Andreassen2011}. Moreover, since there are almost no known formulae for local minimizers of SIMP models, there are few comprehensive studies on FE convergence. We conjecture that locking occurs near the incompressible limit in SIMP models and, consequently, the computed discretized minimizers have a high error even for small mesh sizes.

Rigorously describing locking is challenging and the results of \cite{Ainsworth2022} and \cite[Ch.~VI.3]{Braess2007} do not (immediately) extend to the SIMP model. Thus any proof of non-locking formulations was beyond the scope of this work. Nevertheless, the new convergence results presented in this study aim to lay some groundwork for addressing the locking issue in future research. A comprehensive mathematical understanding of locking in the SIMP model would have significant implications for future implementations.

\appendix

\section{Supplementary proofs}
\label{sec:app:proofs}

\begin{proof}[Proof of \cref{prop:filter:ass}]

Since  $\rho \in \frF$ and thus $f, \rho \geq 0$ a.e.~then $F(\rho)(x) \geq 0$ for all $x \in \Omega$. Moreover,
\begin{align}
|F(\rho)(x)| = \left| \int_\Omega f(x-y) \rho(y) \, \dy \right| \leq \| f \|_{L^1(\mathbb{R}^d)} \| \rho \|_{L^\infty(\Omega)} \leq 1. \label{ass:filtering6}
\end{align}
Thus \labelcref{ass:filtering-box} is satisfied. Moreover, \cref{ass:filtering6} implies that
\begin{align}
\| F(\rho) \|_{L^\infty(\Omega)} = \left \| \int_\Omega f(x-y) \rho(y) \, \dy \right \|_{L^\infty(\Omega)} \leq \|  f \|_{L^\infty(\mathbb{R}^d)} \leq C < \infty. 
\end{align} 
It can be shown that $h(x,y) \coloneqq f(x-y)\rho(y)$ satisfies the condition for the Leibniz integral rule and, therefore,
\begin{align}
\| \nabla F(\rho) \|_{L^\infty(\Omega)} =  \left \| \int_\Omega \nabla_x[f(x-y)] \rho(y) \, \dy \right \|_{L^\infty(\Omega)} \leq \| \nabla f  \|_{L^\infty(\mathbb{R}^d)} \leq C < \infty. 
\end{align}
This implies that $F(\rho) \in W^{1,\infty}(\Omega)$. Hence, \labelcref{ass:filtering2} is satisfied. 

Consider a weakly-* converging sequence $\rho_n \weakstar \rho \in  \frF$   weakly-* in $L^\infty(\Omega)$. This defines a bounded sequence of Lipschitz functions $F(\rho_n)$. Thus, by the Arzel\`a--Ascoli theorem \cite[Th.~1.33]{Adams2003}, there exists a limit $F_0 \in L^\infty(\Omega)$ such that a subsequence (up to relabelling) satisfies $F(\rho_n) \to F_0$ uniformly in $\Omega$. Hence, $F(\rho_n) \to F_0$ strongly in $L^\infty(\Omega)$. Moreover,
\begin{align}
F(\rho_n)(x) = \int_\Omega f(x-y) \rho_n(y) \, \dy \to  \int_\Omega f(x-y) \rho(y) \, \dy = F(\rho)(x). 
\end{align}
Hence $F(\rho_n) \to F(\rho)$ pointwise. Thus, by the uniqueness of limits, $F_0 = F(\rho)$ and $F(\rho_n) \to F(\rho)$ strongly in $L^\infty(\Omega)$. A similar argument reveals that if $\rho_n \weak \rho$ weakly in $L^s(\Omega)$, $s \in [1,\infty)$, then $F(\rho_n) \to F(\rho)$ strongly in $L^s(\Omega)$. Thus \labelcref{ass:filtering} is satisfied.

Note that $F(\rho) + t(F(\rho) - F(\eta)) = F(\rho+t(\eta-\rho)) \in \ffrF$ since $\rho+t(\eta-\rho) \in \frF$. Hence $\ffrF$ is a convex space. Consider any sequence such that $F(\rho_n) \to F_0$ strongly in $W^{1,s}(\Omega)$ for any $s \in [1,\infty]$. This defines a bounded sequence $(\rho_n)$  in $L^\infty(\Omega)$.  Hence, since $\frF$ is a norm-closed, bounded, and convex subset of $L^\infty(\Omega)$, then by the Banach--Alaoglu theorem, there exists an $\rho_0 \in \frF$ such that a subsequence (up to relabelling) satisfies $\rho_n \weakstar \rho_0$ weakly-* in $L^\infty(\Omega)$. As previously shown, this implies that $F(\rho_n) \to F(\rho_0)$ strongly in $L^\infty(\Omega)$. Hence, $F_0 = F(\rho_0)$ and, therefore, $F_0 \in \ffrF$. Hence, $\ffrF$ is a norm-closed, bounded, and convex subset of $W^{1,\infty}(\Omega)$. A similar argument reveals that $\ffrF$ is a norm-closed, bounded, and convex subset of $W^{1,s}(\Omega)$ for any $s \in [1,\infty]$. Therefore, \labelcref{ass:filtering4} is satisfied.
\end{proof}

\begin{proof}[Proof of \cref{lem:density:strong}]
By H\"older's inequality $\| \eta - \eta_{ j } \|_{L^1(\Omega)} \leq |\Omega|^{1/p} \| \eta - \eta_{ j } \|_{L^p(\Omega)}$. Thus $\eta_{ j } \to \eta$ strongly in $L^1(\Omega)$. Therefore, for any $s \in (1,\infty)$,
\begin{align}
\int_{\Omega} | \eta - \eta_{ j }|^s \dx = \int_{\Omega}  | \eta - \eta_{ j }|^{s-1}  | \eta - \eta_{ j }|\dx \leq 1^{s-1} \| \eta - \eta_{ j }\|_{L^1(\Omega)},
\end{align}
which implies that $\eta_{ j } \to \eta$ strongly in $L^s(\Omega)$ for any $s \in [1,\infty)$.
\end{proof}

\begin{proof}[Proof of \cref{lem:filteringh}]
 Fix an $\eta \in L^q(\Omega)$  such that $\eta_h \weak \eta \in \frF$ weakly in $L^q(\Omega)$ ($\eta_h \weakstar \eta$ weakly-* in $L^\infty(\Omega)$ if $q=\infty$) for $q \in [1,\infty]$. Then,  
\begin{align}
\| F(\eta) - F_h(\eta_h) \|_{L^q(\Omega)} \leq \| F(\eta) - F_h(\eta) \|_{L^q(\Omega)} + \|F_h(\eta) - F_h(\eta_h) \|_{L^q(\Omega)}.  \label{eq:app:f1}
\end{align}
The first term on the right-hand side of \cref{eq:app:f1} tends to zero thanks to assumption \labelcref{ass:filtering-projection}. For the second term we note that, by assumptions \labelcref{ass:filtering-projection2} and \labelcref{ass:filtering3},
\begin{align}
 \|F_h(\eta) - F_h(\eta_h) \|_{L^q(\Omega)} \leq C \| F(\eta) - F(\eta_h) \|_{L^q(\Omega)}
 \label{eq:app:f2}
\end{align}
for a constant $C<\infty$ independent of $h$. The right-hand side of \cref{eq:app:f2} tends to zero thanks to \labelcref{ass:filtering}. Hence $\eta_h \weak \eta$ weakly in $L^q(\Omega)$ ($\eta_h \weakstar \eta$ weakly-* in $L^\infty(\Omega)$ if $q=\infty$) implies that $F_h(\eta_h) \to F(\eta)$ strongly in $L^q(\Omega)$. 
\end{proof}

\begin{proof}[Proof of \cref{lem:weakpde}]
 We prove the result for (Case 1) and note that (Case 2) follows with only a couple of small modifications.  For any $\vv \in \Hu$ and $\vv_h \in \vXD$, consider
\begin{align}
\begin{split}
a_h(\hat{\vu}_h, \vv; \hat{\rho}_h) &= a_h(\hat{\vu}_h,  \vv_h; \hat{\rho}_h) + a_h(\hat{\vu}_h, \vv - \vv_h; \hat{\rho}_h)\\
&= l_h(\vv_h) + a_h(\hat{\vu}_h, \vv -\vv_h; \hat{\rho}_h).
\end{split} \label{eq:app2} 
\end{align}
By \labelcref{ass:dense} we may choose $\vv_h$ such that $\vv_h \to \vv$ strongly in $H^1(\Omega)^d$. Thus
\begin{align}
 l_h(\vv_h) \to l(\vv).
\end{align}
due to the linearity of $l(\cdot)$, \labelcref{ass:fh}, and \labelcref{ass:dense}. Moreover, 
\begin{align}
|a_h(\hat{\vu}_h, \vv - \vv_h; \hat{\rho}_h)| \leq \| k(\hat{\rho}_h)\|_{L^\infty(\Omega)} \| \tE \hat{\vu}_h\|_{L^2(\Omega)} \|  \tE(\vv - \vv_h) \|_{L^2(\Omega)} \to 0,  \label{eq:app3} 
\end{align}
thanks to the boundedness of $k(\hat{\rho}_h)$, $\tE\hat{\vu}_h$ and \labelcref{ass:dense}.  The boundedness of $\tE\hat{\vu}_h$  follows from the a priori estimates in the PDE of \cref{ctoh}.  Hence,
\begin{align}
a_h(\hat{\vu}_h, \vv; \hat{\rho}_h) \to l(\vv).  \label{eq:app4}
\end{align}
We note that
\begin{align}
\begin{split}
&|a_h(\hat{\vu}_h, \vv; \hat{\rho}_h) - a(\hat{\vu}, \vv; \hat{\rho})| \\
& \indent \leq | a(  \hat{\vu}_h - \hat{\vu}, \vv; \hat{\rho})| + |a_h(\hat{\vu}_h, \vv; \hat{\rho}_h) - a(\hat{\vu}_h, \vv; \hat{\rho})|. 
\end{split}\label{eq:app6}
\end{align}
The first term on the right-hand side tends to zero thanks to weak convergence of $\hat{\vu}_h$ to $\hat{\vu}$. For the second term we see that
\begin{align}
|a_h(\hat{\vu}_h, \vv; \hat{\rho}_h) - a(\hat{\vu}_h, \vv; \hat{\rho})| \leq \|  (k(\hat{\rho}_h) - k(\hat{\rho}))\tE \vv \|_{L^2(\Omega)} \| \tE\vu_h\|_{L^2(\Omega)}. \label{eq:app7}
\end{align}
By assumption  $\hat{\rho}_h \weak \hat{\rho}$ in $W^{1,q}(\Omega)$ for some $q \in [2,\infty)$ and, by the Rellich--Kondrachov theorem and \cref{lem:density:strong}, $\hat{\rho}_h \to \hat{\rho}$ strongly in $L^2(\Omega)$.  Thus by utilizing \cref{lem:density:strong}, for any smooth test function $\vv \in C_c^\infty(\Omega)^d$,
\begin{align}
a_h(\hat{\vu}_h, \vv; \hat{\rho}_h) \to  a(\hat{\vu}, \vv; \hat{\rho}). \label{eq:app8}
\end{align}
\cref{eq:app8} follows for any $\vv \in \Hu$ by the density of smooth and compactly supported functions in $\Hu$. Hence, \cref{eq:app4} and \cref{eq:app8} imply that, for any $\vv \in \Hu$,
\begin{align}
a(\hat{\vu}, \vv; \hat{\rho}) = l(\vv). 
\end{align}
\end{proof}

\begin{proof}[Proof of \cref{lem:primal:strong:uh}]
 We prove the result for (Case 1) and note that (Case 2) follows with only a couple of small modifications.  The first step is to show that $\hat{\vu}_h \weak \vu$ in $H^1(\Omega)^d$. The sequence generated by $\hat{\rho}_h$ defines a bounded sequence of $\hat{\vu}_h$ as $h \to 0$. Since $\Hu$ is a reflexive space, then there exists a limit $\hat{\vu} \in \Hu$ and a subsequence (up to relabelling) such that $\hat{\vu}_h \weak \hat{\vu} \in \Hu$. We must now identify $\hat{\vu}$ with $\vu$.

The requirements of \cref{lem:weakpde} are satisfied and thus, for any $\vv \in \Hu$,
$a(\hat{\vu}, \vv; \rho) = l(\vv)$.  Due to the uniqueness of limits and the uniqueness of solutions of the linear elasticity problem with a fixed $\rho$ (\cref{prop:uniqueness}), we deduce that $\hat{\vu} = \vu$ a.e.~and $\hat{\vu}_h \weak \vu$ in $H^1(\Omega)^d$.

We now strengthen the convergence to strong convergence. By Korn's inequality \cite[Th.~11]{Bauer2016}, and the assumption that $\epsilon_{\text{SIMP}}>0$, there exists a $c>0$ such that
\begin{align}
\begin{split}
c \| \vu - \hat{\vu}_h \|^{2}_{H^1(\Omega)} 
&\leq a_h(\vu - \hat{\vu}_h, \vu - \hat{\vu}_h; \hat{\rho}_h)\\
&=a_h(\vu - \hat{\vu}_h, \vu; \hat{\rho}_h) + a_h(\hat{\vu}_h, \hat{\vu}_h; \hat{\rho}_h) - a_h(\vu, \hat{\vu}_h; \hat{\rho}_h)\\
&=  a_h(\vu - \hat{\vu}_h, \vu; \hat{\rho}_h) +l_{ h }( \hat{\vu}_h ) - a_h(\vu, \hat{\vu}_h; \hat{\rho}_h).
\end{split} \label{eq:app1}
\end{align}
The first term on the rightmost-hand side of \cref{eq:app1}  is bounded above as follows 
\begin{align}
\begin{split}
& a_h(\vu - \hat{\vu}_h, \vu; \hat{\rho}_h) \\
&\indent \leq a(\vu - \hat{\vu}_h, \vu;\rho)
 + \| k(\rho) - k(\rho_h) \|_{L^2(\Omega)}  \| \tE(\vu - \hat{\vu}_h) \|_{L^2(\Omega)} \|\tE \vu \|_{L^2(\Omega)}. 
\end{split}
\end{align}
The first term tends to zero  thanks to $\hat{\vu}_h \weak \vu$ weakly in $H^1(\Omega)^d$.  The second term converges to zero since  $\rho_h \to \rho$ strongly in $L^q(\Omega)$ for some $q \in [2,\infty]$ (and thus for any $q \in [1,\infty)$ by \cref{lem:density:strong}). 

The second term on the rightmost-hand side of \cref{eq:app1} tends to  $l(\vu)$  thanks to \labelcref{ass:fh} and the weak convergence of $\hat{\vu}_h$  in $H^1(\Omega)^d$.  By the same arguments as \cref{eq:app2}--\cref{eq:app3}, we note that  $a_h(\vu, \hat{\vu}_h; \hat{\rho}_h) \to l(\vu)$.  Hence, we conclude that $\hat{\vu}_h \to \vu$ strongly in $H^1(\Omega)^d$. 
\end{proof}

\printbibliography

\end{document}